\documentclass[pdflatex,sn-mathphys-num]{sn-jnl}
\usepackage{graphicx}
\usepackage{multirow}
\usepackage{amsmath,amssymb,amsfonts}
\usepackage{amsthm}
\usepackage{mathrsfs}
\usepackage[title]{appendix}
\usepackage{xcolor}
\usepackage{textcomp}
\usepackage{manyfoot}
\usepackage{booktabs}
\usepackage{algorithm}
\usepackage{algorithmicx}
\usepackage{algpseudocode}
\usepackage{listings}
\theoremstyle{thmstyleone}
\newtheorem{theorem}{Theorem}
\newtheorem{corollary}{Corollary}

\theoremstyle{thmstyletwo}
\newtheorem{example}{Example}

\newtheorem{lemma}{Lemma}
\theoremstyle{thmstylethree}

\begin{document}
\title[On convergence of extended randomized Kaczmarz methods for matrix equations]{
On convergence of residual-based extended randomized Kaczmarz methods for matrix equations}
\author*[1]{\fnm{Wendi} \sur{Bao}}\email{baowd@upc.edu.cn}
\author[2]{\fnm{Jing} \sur{Li}}\email{17861271071@163.com}
\author[3]{\fnm{Lili} \sur{Xing}}\email{xinglily2010@upc.edu.cn}
\author[4]{\fnm{Weiguo} \sur{Li}}\email{liwg@upc.edu.cn}
\author[5]{\fnm{Jichao} \sur{Wang}}\email{wangjc@upc.edu.cn}
\affil{\orgdiv{College of Science}, \orgname{China University of Petroleum}, \orgaddress{\street{} \city{Qingdao}, \postcode{266580}, \state{} \country{P.R. China}}}

\abstract{In this paper, for solving inconsistent matrix equations we propose a dual-space residual-based randomized extended Kaczmarz method and its version with Nesterov momentum. Without the full column rank assumptions on coefficient matrices, we provide a thorough convergence analysis, and derive upper bounds for the convergence rates of the new methods. A feasible range for the momentum parameters is determined. Numerical experiments demonstrate that the proposed methods are much more effective than the existing ones, especially the method with momentum.
}

\keywords{ Randomized extended Kaczmarz method; Matrix equation; Inconsistent linear system; Nesterov momentum; Dual-space residual}

\pacs[MSC Classification]{65F05, 65F45, 65F20, 15A06, 15A24}

\maketitle

\section{Introduction}\label{sec1}

In this paper, we consider the following matrix equation
\begin{equation}\label{eq2}
	AX=B,  
\end{equation}
where $A\in R^{m\times n},B\in R^{m\times p}$. Here we will find its unique minimal $F$-norm least square solution $X^*=A^+B$, which implies that the system may be inconsistent. 
Such linear systems play a significant role in numerous mathematical and applied disciplines, such as algebra, differential equations, probability and statistics, biological sciences, and economics.
For solving matrix equations, many methods have been proposed (see some recent references, such as \cite{Numericalstrategies, Aniterative, Hermitian, Least, Matrix2010}). In \cite{Leastsquares}, for solving the operator least squares problem, Hajarian proposed the conjugate gradient least squares method via matrix-matrix products. Using the method in \cite{Leastsquares}, one can find the solution of the problem within a finite number of iterations in the absence of round-off errors. Many of these methods frequently use the matrix-matrix product operation, which consumes a large amount of computing time.

To reduce per-iteration computational cost, Wu et al.~\cite{Selection} developed two greedy Kaczmarz-type methods for the consistent matrix equation $AXB=C$. Despite the time-consuming greedy selection strategy, their core ideas are well-suited for large-scale consistent matrix equations. Wang and Song~\cite{Deterministic} later proposed an improved deterministic block Kaczmarz method for the same equation. Shafiei and Hajarian~\cite{Sylvester} extended the Kaczmarz method to matrix form for solving Sylvester equations via matrix-vector products. Li, Bao et al.~\cite{BaoLi} proposed a class of Kaczmarz-type methods for $AX=B$ and $XA=C$ without the assumption that the coefficient matrix is of full column rank. Here,
 the extended Kaczmarz methods (REKIAX) and the extended coordinate descent methods (REGSIAX) are obained for solving the inconsistent matrix equations $AX=B$. Recently, Ding and Huang~\cite{Quaternion} introduced a quaternion relaxed greedy randomized Kaczmarz method with adaptive parameters for quaternion matrix equations. 

It is well known that we can significantly improve the convergence rate of the algorithm by employing appropriate probability selection rules and momentum acceleration strategies. In 2025, under the assumption that the coefficient matrix is of full column rank, Chen and Gao~\cite{2025Accelerating} proposed the dual-space residual REK method for linear systems $Ax=b$,  achieving significantly accelerated convergence. Based on Nesterov's work, Sutskever et al. developed the applications of the Nesterov momentum method in deep learning \cite{2013momentum}. In~\cite{2024constrained}, Yin et al. combined Nesterov momentum with the Kaczmarz method and proposed a momentum-based constrained Kaczmarz method for image reconstruction problems.

Inspired by \cite{2024constrained,2025Accelerating,BaoLi}, we propose a dual-space residual-based random extended Kaczmarz (DREK, see Algorithm~\ref{algorithm6}) method and its Nesterov momentum accelerated (MDREK, see Algorithm~\ref{algorithm7}) method for solving the inconsistent matrix equation (\ref{eq2}). Without the assumption that the coefficient matrix is of full column rank, the convergence of the methods is investigated. Numerical experiments are presented to verify the efficiency of the new methods.

The notations in this paper are expressed as follows. For a matrix $A\in R^{m\times n}$, we use $A^T, ||A||_2, ||A||_F, A_{:,j_k}, A_{i_k,:}$ and $R(A)$ to represent the transpose, Euclidean norm, Frobenius norm, $j_k$-th column, $i_k$-th row and the range space of a matrix $A$, respectively.
$A_{:,j_k}^{(k)}, A_{i_k,:}^{(k)}$ represent the $j_k$-th column and $i_k$-th row at the $k$-th iteration selected. And denote $B^{\perp}$ is a matrix with the $j$th column such that $(B^{\perp})_{:,j}\in R(A)^{\perp}$.  For the right-hand side matrix $B$, there is the expression $B=\hat{B}+B^{\perp}$, where $\hat{B}_{:,j}\in R(A)$. In addition,
$\lambda_{min}(A^TA)$ and $\lambda_{max}(A^TA)$ represent its nonzero smallest and largest eigenvalues of the matrix $A^TA$, respectively. 

The structure of this paper is as follows. In Section~\ref{sec2}, we propose two new methods and provide their convergence analysis. In Section~\ref{sec3}, numerical experiments are presented to verify the effectiveness of the new methods. Finally, we present conclusions in Section~\ref{sec4}.

\section{Dual-space residual-based randomized extended Kaczmarz methods}\label{sec2}

To accelerate the convergence rate of methods for solving the linear system $Ax=b$, Chen and Gao~\cite{2025Accelerating} proposed the REK method with dual-space residuals (REKDR), which is described in Algorithm~\ref{algorithm2}.
Based on the REKDR method, the REKIAX in \cite{BaoLi}, and Nesterov momentum, we establish a dual-space residual-based random extended Kaczmarz method (DREK) and its momentum-accelerated variant (MDREK) for solving inconsistent matrix equation system \eqref{eq2}, detailed in Algorithms~\ref{algorithm6} and \ref{algorithm7}, respectively.

\begin{algorithm}
	\caption{REKDR method for $Ax=b$}\label{algorithm2}
	\begin{algorithmic}[1]
		\Statex \textbf{Input:} $A,b,l,x_0$  and $z_0=b$
		\Statex \textbf{Output:} $x_l$
		\State \textbf{For} {$k = 0$ to $l-1$} \textbf{do}
		\State Compute $\hat{r}_k=A^Tz_k$
		\State Select $j_k\in \{1,2,\cdots,n\}$ with probability $Pr(column=j_k)=\frac{|\hat{r}^{(j_k)}_k|^2}{||\hat{r}_k||_2^2}$
		\State Compute $z_{k+1}=z_k-\frac{A^T_{j_k}z_k}{||A_{(j_k)}||_2^2}A_{(j_k)}$
		\State Compute $r_k=b-Ax_k-z_k$
		\State Select $i_k\in {1,2,\cdots,m}$ with probability $Pr(row=i_k)=\frac{|r^{(i_k)}_k|^2}{||r_k||_2^2}$
		\State Compute $x_{k+1}=x_k+\frac{b^{(i_k)}-A^{(i_k)}x_k-z_{k+1}^{(i_k)}}{||A^{(i_k)}||_2^2}(A^{(i_k)})^T$
		\State \textbf{EndFor}
	\end{algorithmic}
\end{algorithm}

\begin{algorithm}
	\caption{ Dual-Space Residual-Based Randomized Extended Kaczmarz (DREK) method for $AX=B$ }\label{algorithm6}
	\begin{algorithmic}[1]
		\Statex \textbf{Input:} $A\in R^{m\times n},B\in R^{m\times p}, Y^{(0)}=X^{(0)}=0\in R^{n\times p},\ \gamma, l\in R$ and $Z^{(0)}=B$ 
		\Statex \textbf{Output:} $X^{(l)}$
		\State \textbf{For} {$k = 0$ to $l-1$} \textbf{do}
		\State  Set $Pr(column=j)=\frac{||\hat{r}_{:,j_k}||_2^2}{||\hat{r}||_F^2}$ with $\hat{r}_k=A^TZ^{(k)}$ 
		\State Compute $Z^{(k+1)}=Z^{(k)}-\frac{A_{:,j_k}}{||A_{:,j_k}||_2^2}((A_{:,j_k})^TZ^{(k)})$
		\State Set $Pr(row=i_k)=\frac{||r_{i_k,:}||_2^2}{||r||_F^2}$ with $r_k=B-AX^{(k)}-Z^{(k+1)}$
		\State Compute $X^{(k+1)}=X^{(k)}+\frac{(A_{i_k,:})^T(B_{i_k,:}-Z^{(k+1)}_{i_k,:}-A_{i_k,:}X^{(k)})}{||A_{i_k,:}||_2^2}$
		\State \textbf{endfor}
	\end{algorithmic}
\end{algorithm}

%%MREKDR2算法，加动量
\begin{algorithm}
	\caption{ Dual-Space Residual-Based Randomized Extended Kaczmarz method with Nesterov momentum (MDREK) for  $AX=B$}\label{algorithm7}
	\begin{algorithmic}[1]
		\Statex \textbf{Input:} $A\in R^{m\times n},B\in R^{m\times p}, Y^{(0)}=X^{(0)}\in R^{n\times p}, X^{(0)}_{:,j}\in R(A^T)(j=1,2,\cdots, p), Z^{(0)}=B$,\ and Parameter $\gamma$.
		\Statex \textbf{Output:} $X_l$
		\State \textbf{For} {$k =0$ to $l-1$} \textbf{do}
		\State  Set $Pr(column=j)=\frac{||\hat{r}_{:,j_k}||_2^2}{||\hat{r}||_F^2}$ with $\hat{r}_k=A^TZ^{(k)}$ 
		\State Compute $Z^{(k+1)}=Z^{(k)}-\frac{A_{:,j_k}}{||A_{:,j_k}||_2^2}((A_{:,j_k})^TZ^{(k)})$\label{for3}
		\State Set $Pr(row=i_k)=\frac{||r_{i_k,:}||_2^2}{||r||_F^2}$ with $r_k=B-AY^{(k)}-Z^{(k+1)}$
		\State Compute $X^{(k+1)}=Y^{(k)}+\frac{(A_{i_k,:})^T(B_{i_k,:}-Z^{(k+1)}_{i_k,:}-A_{i_k,:}Y^{(k)})}{||A_{i_k,:}||_2^2}$
		\State Compute $Y^{(k+1)}=X^{(k+1)}+\gamma(X^{(k+1)}-X^{(k)})$
		\State \textbf{endfor}
	\end{algorithmic}
\end{algorithm}

Next, we present several lemmas to establish the convergence of the new methods.

\begin{lemma}(\cite{2023OnBai})\label{lemma1}
    Let $m$ be a positive integer, $x_i\geq 0,y_i>0(i=1,2,\cdots,m)$. Then the following inequalities hold
    \begin{equation*}
        \sum_{i=1}^m\frac{x_i^2}{y_i}\geq \frac{(\sum_{i=1}^mx_i)^2}{\sum_{i=1}^my_i}, 
        \sum_{i=1}^mx_i^2\geq \frac{1}{m}(\sum_{i=1}^mx_i)^2.
    \end{equation*}
\end{lemma}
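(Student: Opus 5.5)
Both inequalities in Lemma~\ref{lemma1} are instances of the Cauchy--Schwarz inequality in $R^m$, and I would derive each one directly from it.

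For the first inequality, the natural choice of vectors is $u=(x_i/\sqrt{y_i})_{i=1}^m$ and $v=(\sqrt{y_i})_{i=1}^m$. These are well defined because every $y_i>0$. Cauchy--Schwarz then gives
\begin{equation*}
\Bigl(\sum_{i=1}^m x_i\Bigr)^2=\Bigl(\sum_{i=1}^m \frac{x_i}{\sqrt{y_i}}\sqrt{y_i}\Bigr)^2\le \Bigl(\sum_{i=1}^m\frac{x_i^2}{y_i}\Bigr)\Bigl(\sum_{i=1}^m y_i\Bigr).
\end{equation*}
Since $\sum_{i=1}^m y_i>0$, dividing by it yields the claimed bound.

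The second inequality follows from the first by taking $y_i=1$ for all $i$, so that $\sum_{i=1}^m y_i=m$. Alternatively, one can apply Cauchy--Schwarz directly to $(x_i)$ and the all-ones vector.

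There is no real obstacle here. The only points to check are that the denominators are positive, which holds because $y_i>0$, and that the nonnegativity of the $x_i$ is not actually needed, since squaring removes signs.
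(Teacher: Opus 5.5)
Your proof is correct and complete. The paper gives no proof of Lemma~\ref{lemma1} (it is quoted from \cite{2023OnBai}), and your Cauchy--Schwarz argument with $u_i=x_i/\sqrt{y_i}$, $v_i=\sqrt{y_i}$, specialized to $y_i\equiv 1$ for the second inequality, is the standard proof of this (Engel/Sedrakyan) form of Cauchy--Schwarz; you are also right that $x_i\ge 0$ is superfluous, the only needed hypothesis being $y_i>0$, which in the paper's applications ($y_i=\|A_{:,j}\|_2^2$ or $\|A_{i,:}\|_2^2$) amounts to $A$ having no zero columns or rows.
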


\begin{lemma}\label{lemma2}
    Let $\alpha_1,\beta_1$ be real numbers such as $\alpha_1\in (0,1)$ and $\beta_1-\alpha_1=\beta_1\alpha_1$.
    Then 
    \begin{equation*}
       ||X+Y||_F^2\geq \alpha_1||X||_F^2-\beta_1||Y||_F^2, \forall  X,Y\in R^{n\times p}.
    \end{equation*}
\end{lemma}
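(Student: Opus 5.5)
The plan is to expand the square and bound the cross term with a weighted Young inequality. For all $X,Y\in R^{n\times p}$,
\begin{equation*}
||X+Y||_F^2=||X||_F^2+2\langle X,Y\rangle_F+||Y||_F^2,
\end{equation*}
where $\langle X,Y\rangle_F=\mathrm{tr}(X^TY)$ is the Frobenius inner product. By Cauchy--Schwarz and then Young's inequality with a weight $\varepsilon>0$,
\begin{equation*}
2\langle X,Y\rangle_F\geq -2||X||_F||Y||_F\geq -\varepsilon||X||_F^2-\frac{1}{\varepsilon}||Y||_F^2 .
\end{equation*}
Substituting this into the expansion gives
\begin{equation*}
||X+Y||_F^2\geq (1-\varepsilon)||X||_F^2-\Bigl(\frac{1}{\varepsilon}-1\Bigr)||Y||_F^2 .
\end{equation*}

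Next, I choose $\varepsilon=1-\alpha_1$, which lies in $(0,1)$ because $\alpha_1\in(0,1)$. The coefficient of $||X||_F^2$ then becomes $\alpha_1$, and the coefficient of $||Y||_F^2$ becomes
\begin{equation*}
\frac{1}{1-\alpha_1}-1=\frac{\alpha_1}{1-\alpha_1}.
\end{equation*}

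It remains to identify this quantity with $\beta_1$. The relation $\beta_1-\alpha_1=\beta_1\alpha_1$ rearranges to $\beta_1(1-\alpha_1)=\alpha_1$. Since $\alpha_1\neq 1$, this gives exactly $\beta_1=\alpha_1/(1-\alpha_1)$, which yields the claimed inequality.

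There is no real obstacle here. The only points needing care are that the weight $\varepsilon$ is positive, which is why $\alpha_1<1$ is required, and that the constraint on $\beta_1$ determines it uniquely from $\alpha_1$. An alternative is to apply the vector inequality columnwise to $X_{:,j}+Y_{:,j}$ and sum over $j=1,\dots,p$, but the direct Frobenius inner product argument above is cleaner.
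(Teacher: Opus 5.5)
Your proof is correct and matches the paper's argument: expand $||X+Y||_F^2$, bound the cross term by Cauchy--Schwarz and the weighted mean inequality with weight $1-\alpha_1$, then identify $\alpha_1/(1-\alpha_1)$ with $\beta_1$ via the constraint $\beta_1(1-\alpha_1)=\alpha_1$. Your version is slightly cleaner, since you state explicitly that this constraint determines $\beta_1$, whereas the paper writes an equality step as an inequality.
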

\begin{proof}
   Since $(ta - b)^2 \geq 0$, we have $ab \leq \frac{ta^2}{2} + \frac{b^2}{2t}$. 
Let $a = \|X\|_F$, $b = \|Y\|_F$, $t = 1 - \alpha_1 > 0$, where $\alpha_1 \in (0,1)$. 
Thus, it follows from the above mean inequality that
    \begin{equation*}
        ||X||_F||Y||_F\leq \frac{(1-\alpha_1)||X||_F^2}{2}+\frac{||Y||_F^2}{2(1-\alpha_1)}.
    \end{equation*}
    Combing with Cauchy-Schwarz inequality $|\langle X,Y\rangle_F|\leq ||X||_F||Y||_F$,we obtain
    \begin{align*}
       \langle X,Y\rangle_F
       &\geq -||X||_F||Y||_F\\
       &\geq -\frac{(1-\alpha_1)||X||_F^2}{2}-\frac{||Y||_F^2}{2(1-\alpha_1)}.
    \end{align*}
    Substituting the above inequality into the squared expansion yields
    \begin{align*}
        ||X+Y||_F^2
        &=||X||_F^2+2\langle X,Y\rangle_F+||Y||_F^2\\
        &\geq ||X||_F^2-(1-\alpha_1)||X||_F^2-\frac{||Y||_F^2}{1-\alpha_1}+||Y||_F^2\\
        &=\alpha_1||X||_F^2+(1-\frac{1}{1-\alpha_1})||Y||_F^2\\
        &\geq \alpha_1||X||_F^2-\frac{\alpha_1}{1-\alpha_1}||Y||_F^2\\
        &=\alpha_1||X||_F^2-\beta_1||Y||_F^2.
    \end{align*}
\end{proof}
 For the recurrence relation, refer to the proof of Lemma 9 in reference \cite{2017Stochastic}, we present a new generalized recurrence relation as follows, which plays an vital role in prove the convergence in Theorem \ref{thm2}.
\begin{lemma}\label{lemma3}(Generalized recursive relation)
    Let \( F_0 = F_1 \geq 0 \), and the non-negative sequence \( \{F_k\}_{k \geq 0} \) satisfy the following recursive relation:
    \begin{equation}\label{eq3}
        F_{k+1}\leq a_1F_k+a_2F_{k-1}+a_3,\forall k\geq1,
    \end{equation}
    where \( a_2 \geq 0 \), \( a_3 \geq 0 \), \( a_1 + a_2 < 1 \), and at least one of the coefficients \( a_1 \) and \( a_2 \) is positive.  If \( q = \frac{a_1 + \sqrt{a_1^2 + 4a_2}}{2} \) and \( \epsilon = q - a_1 \geq 0 \), then the following three conclusions hold:\\
    (1) $0<q<1$; \\
    (2) $q\geq a_1+a_2$, equality holds if and only if \( a_2 = 0 \);\\
    (3) $\forall k\geq1$, we can get
    \begin{equation*}
        F_k\leq q^{k-1}(F_1+\epsilon F_0)+\frac{a_3}{1-q}.
    \end{equation*}
   In particular, if \( F_1 = F_0 \), we have
    \begin{equation}\label{eq3}
        F_{k}\leq q^{k-1}(1+\epsilon)F_0+\frac{a_3}{1-q}.
    \end{equation}
\end{lemma}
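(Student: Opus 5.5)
Parts (1) and (2) concern only the quadratic $t^2-a_1t-a_2=0$, whose nonnegative root is $q$. For (3), rewrite the recursion so that the characteristic root $q$ appears explicitly and then telescope.

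\textbf{Part (1).} Since $a_2\ge 0$, we have $\sqrt{a_1^2+4a_2}\ge |a_1|$, so $q\ge 0$.

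If $q=0$, then $a_1\le 0$ and $a_1^2+4a_2=a_1^2$. This forces $a_2=0$, and then $a_1\le 0$ contradicts the requirement that $a_1$ or $a_2$ be positive. Hence $q>0$.

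For $q<1$, set $\phi(t)=t^2-a_1t-a_2$. It is increasing for $t\ge a_1/2$, and $q$ is its larger root. Moreover $\phi(1)=1-a_1-a_2>0$. If $a_1/2\le 1$, this gives $q<1$. If $a_1/2>1$, then $a_1\ge 2$ contradicts $a_1+a_2<1$ (because $a_2\ge 0$). Hence $q<1$.

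\textbf{Part (2).} We have $\phi(a_1+a_2)=(a_1+a_2)^2-a_1(a_1+a_2)-a_2=a_2(a_1+a_2-1)\le 0$. So $a_1+a_2$ lies between the two roots of $\phi$, and therefore $a_1+a_2\le q$. If $a_2>0$, then $\phi(a_1+a_2)<0$ and the inequality is strict. If $a_2=0$, then $q=(a_1+|a_1|)/2=a_1$, since $a_1>0$ in that case.

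\textbf{Part (3).} Take $\epsilon=q-a_1\ge 0$. Because $q^2=a_1q+a_2$, we get $\epsilon q=q^2-a_1q=a_2$, i.e. $q\epsilon=a_2$.

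Add $\epsilon F_k$ to both sides of \eqref{eq3}:
\[
F_{k+1}+\epsilon F_k\le (a_1+\epsilon)F_k+a_2F_{k-1}+a_3=q\,(F_k+\epsilon F_{k-1})+a_3.
\]
Put $G_k=F_k+\epsilon F_{k-1}$. Then $G_{k+1}\le qG_k+a_3$ for all $k\ge 1$. Iterating gives
\[
G_k\le q^{k-1}G_1+a_3(1+q+\dots+q^{k-2})\le q^{k-1}(F_1+\epsilon F_0)+\frac{a_3}{1-q},
\]
using $0<q<1$ from part (1).

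Finally, $\epsilon F_{k-1}\ge 0$ because $F_{k-1}\ge 0$ and $\epsilon\ge 0$, so $F_k\le G_k$. This yields (3). Substituting $F_1=F_0$ gives the special case.

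The main obstacle is choosing the combination $G_k=F_k+\epsilon F_{k-1}$ so that the coefficients close up exactly: $a_1+\epsilon=q$ and $a_2=q\epsilon$. Both identities follow directly from $q$ being a root of $\phi$. The nonnegativity of $\epsilon$ is exactly what allows $G_k$ to be dropped back to $F_k$.
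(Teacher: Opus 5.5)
Your proof is correct and follows essentially the same route as the paper: the key step in (3) is the paper's own, adding $\epsilon F_k$ so that $a_1+\epsilon=q$ and $a_2=q\epsilon$ turn the recursion into $F_{k+1}+\epsilon F_k\le q(F_k+\epsilon F_{k-1})+a_3$, then iterating and dropping $\epsilon F_{k-1}\ge 0$. For (1) and (2) you use the sign of $\phi(t)=t^2-a_1t-a_2$ at $t=1$ and at $t=a_1+a_2$, whereas the paper uses monotonicity of the root formula in $a_1$ and the identity $a_1+a_2=q+\epsilon(q-1)$; the two arguments are equivalent, and yours handles the equality case and $q>0$ somewhat more explicitly.
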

\begin{proof}
First, we verify that \( \epsilon = \frac{-a_1 + \sqrt{a_1^2 + 4a_2}}{2} \) satisfies \( \epsilon \geq 0 \) and \( a_2 \leq (a_1 + \epsilon)\epsilon \). Since \( a_2 \geq 0 \) and the definition of $\epsilon$, it follows that \( \epsilon \geq 0 \). As \( \epsilon \) is a solution to the equation \( (a_1 + \epsilon)\epsilon - a_2 = 0 \), it also satisfies \( a_2 \leq (a_1 + \epsilon)\epsilon \).

Next, we verify that \( 0 < q < 1 \). Since \( q = a_1 + \epsilon = \frac{a_1 + \sqrt{a_1^2 + 4a_2}}{2} \), the non-negativity of \( q \) stems from the non-negativity of \( a_2 \). Therefore, as long as \( a_2 \geq 0 \), we have \( q \geq 0 \). Meanwhile, since at least one of the coefficients \( a_1 \) and \( a_2 \) is positive, \( q > 0 \) holds. Combining this with \( a_1 + a_2 < 1 \), we can derive that
\begin{equation*}
    q=\frac{a_1+\sqrt{a_1^2+4a_2}}{2}<\frac{1-a_2+\sqrt{(1-a_2)^2+4a_2}}{2}=1
\end{equation*}
Therefore, \( 0 < q < 1 \) holds.

Moreover, we verify that \( q \geq a_1 + a_2 \), with equality if and only if \( a_2 = 0 \). Since \( a_1 = q - \epsilon \), \( a_2 = q\epsilon \), and \( a_1 + a_2 < 1 \), we have \( a_1 + a_2 = q - \epsilon + q\epsilon = q + \epsilon(q - 1) \leq q \), which holds true.

    Finally, we verify Inequality (\ref{eq3}) of Lemma \ref{lemma3}. Given \( F_1 = F_0 \), adding \( \epsilon F_k \) to both sides yields
    \begin{align*}
        F_{k+1}
        &\leq F_{k+1}+\epsilon F_k\\
       &\leq a_1F_k+a_2F_{k-1}+a_3+\epsilon F_k\\
        &=q(F_k+\epsilon F_{k-1})+a_3\\
        &\leq q(q(F_{k-1}+\epsilon F_{k-2})+a_3)+a_3\\
        &=q^2(F_{k-1}+\epsilon F_{k-2})+a_3q+a_3\\
        &\leq \cdots\\
        &\leq q^{k}(F_1+\epsilon F_0)+a_3q^{k-1}+a_3q^{k-2}+\cdots+a_3q+a_3\\
        &\leq q^k(F_1+\epsilon F_0)+\frac{a_3}{1-q}\\
        &= q^k(1+\epsilon) F_0+\frac{a_3}{1-q}.
    \end{align*}
    Therefore, inequality (\ref{eq3}) of Lemma \ref{lemma3} also holds. 
\end{proof}
\begin{lemma}\label{thm1}
     Considering the coefficient matrix $A\in R^{m\times n}$ of the linear system
    $AX=B$, the vector $B\in R^{m\times p}$. Then, the MDREK method with the initial condition $Z^{(0)}=B$ holds that 
    \begin{equation}\label{eq8}
        E||Z^{(k)}-B^\perp||_F^2\leq \big(1-\frac{\lambda_{min}(A^TA)}{||A||_F^2}\big)^k\lambda_{max}(A^TA)||X_*||_F^2,k=1,2,\dots.
    \end{equation}
\end{lemma}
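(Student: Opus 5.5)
The plan is to study the error $E^{(k)}:=Z^{(k)}-B^\perp$ column-wise through the projection structure of step \ref{for3}. The $Z$-iteration is decoupled from $X^{(k)}$ and $Y^{(k)}$, so the statement is really about a randomized coordinate-descent (column Kaczmarz) iteration on $A^TZ=0$.

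\textbf{Step 1: invariant subspace.} Since $A^TB^\perp=0$, we have $(A_{:,j_k})^TB^\perp=0$, and therefore
\begin{equation*}
E^{(k+1)}=\Big(I-\frac{A_{:,j_k}(A_{:,j_k})^T}{\|A_{:,j_k}\|_2^2}\Big)E^{(k)} .
\end{equation*}
The columns of $E^{(0)}=B-B^\perp=\hat B$ lie in $R(A)$. The update subtracts a multiple of $A_{:,j_k}\in R(A)$, so by induction every column of $E^{(k)}$ stays in $R(A)$.

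\textbf{Step 2: one-step contraction.} Because the map above is an orthogonal projection, we get
\begin{equation*}
\|E^{(k+1)}\|_F^2=\|E^{(k)}\|_F^2-\frac{\|(A_{:,j_k})^TE^{(k)}\|_2^2}{\|A_{:,j_k}\|_2^2}.
\end{equation*}
Note that $\hat r_k=A^TZ^{(k)}=A^TE^{(k)}$. Take the conditional expectation over $j_k$ with probability $\|\hat r_{j_k,:}\|_2^2/\|\hat r_k\|_F^2$; the row of $\hat r_k$ indexed by column $j_k$ is what the paper's notation intends. This gives
\begin{equation*}
E_k\frac{\|(A_{:,j})^TE^{(k)}\|_2^2}{\|A_{:,j}\|_2^2}=\frac{\sum_j \|\hat r_{j,:}\|_2^4/\|A_{:,j}\|_2^2}{\|\hat r_k\|_F^2}\ \ge\ \frac{\|\hat r_k\|_F^2}{\|A\|_F^2},
\end{equation*}
using the first inequality of Lemma \ref{lemma1} with $x_j=\|\hat r_{j,:}\|_2^2$ and $y_j=\|A_{:,j}\|_2^2$. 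Since the columns of $E^{(k)}$ lie in $R(A)$, the bound $\|A^TE^{(k)}\|_F^2\ge\lambda_{min}(A^TA)\|E^{(k)}\|_F^2$ holds, with $\lambda_{min}$ the smallest nonzero eigenvalue, because $AA^T$ and $A^TA$ share their nonzero eigenvalues. Hence
\begin{equation*}
E_k\|E^{(k+1)}\|_F^2\le\Big(1-\frac{\lambda_{min}(A^TA)}{\|A\|_F^2}\Big)\|E^{(k)}\|_F^2 .
\end{equation*}

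\textbf{Step 3: iterate and bound the initial term.} Taking full expectation and unrolling gives the factor $(1-\lambda_{min}/\|A\|_F^2)^k\|\hat B\|_F^2$. It remains to show $\|\hat B\|_F^2\le\lambda_{max}(A^TA)\|X_*\|_F^2$. Since $X_*=A^+B$, we have $AX_*=AA^+B=\hat B$, because $AA^+$ is the orthogonal projector onto $R(A)$. Therefore $\|\hat B\|_F^2=\|AX_*\|_F^2\le\|A\|_2^2\|X_*\|_F^2=\lambda_{max}(A^TA)\|X_*\|_F^2$.

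\textbf{Main obstacle.} The delicate point is Step 2's use of $\lambda_{min}$ without a full column rank assumption. This is exactly why Step 1's range invariance is needed: the lower bound $\|A^TE\|_F\ge\sigma_{min}\|E\|_F$ holds only for columns in $R(A)$. A secondary point is the degenerate case $\hat r_k=0$, where the sampling distribution is undefined. In that case $E^{(k)}\in R(A)\cap R(A)^\perp$ forces $E^{(k)}=0$, and the bound holds trivially.
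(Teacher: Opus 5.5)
Your proof is correct and follows the same route as the paper. Both arguments use the projection recursion $Z^{(k+1)}-B^\perp=P_{(j_k)}(Z^{(k)}-B^\perp)$, the conditional expectation under the residual-based column sampling with Lemma~\ref{lemma1} giving the factor $\|A^TZ^{(k)}\|_F^2/\|A\|_F^2$, the $\lambda_{min}(A^TA)$ bound for columns in $R(A)$, and unrolling with $\|\hat{B}\|_F^2\le\lambda_{max}(A^TA)\|X_*\|_F^2$. You also justify several points the paper only asserts: the $R(A)$-invariance by induction, $\hat{B}=AX_*$ via $AA^+$, and the degenerate case $\hat r_k=0$; and you correctly write the one-step relation as an inequality where the paper writes an equality.
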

\begin{proof}
    Denote 
    \begin{equation*}
         P_{(j_k)}=I-\frac{A_{:,j_k}A_{:,j_k}^T}{||A_{:,j_k}||_2^2},j_k=1,2,\cdots,n.
    \end{equation*}
   Then we can get $P_{(j_k)}^2=P_{(j_k)},P_{(j_k)}^T=P_{(j_k)}.$
According to the third step of Algorithm \ref{algorithm7}, we obtain
\begin{equation*}
Z^{(k+1)}=Z^{(k)}-\frac{A_{:,j_k}(A_{:,j_k}^TZ^{(k)})}{||A_{:,j_k}||_2^2}=(I-\frac{A_{:,j_k}A_{:,j_k}^T}{||A_{:,j_k}||_2^2}) Z^{(k)}=P_{(j_k)}Z^{(k)}.
\end{equation*}
Due to $B^\perp\in R(A)^\perp$, so we can get $A^TB^\perp=0$. Thus we have
\begin{equation*}
    P_{(j_k)}B^\perp=B^\perp-\frac{A_{:,j_k}A_{:,j_k}^T}{||A_{:,j_k}||_2^2}B^\perp=B^\perp-0=B^\perp.
\end{equation*}
Furthermore, it is easy to get that 
\begin{equation}\label{eq5}
    Z^{(k+1)}-B^\perp=P_{(j_k)}Z^{(k)}-B^\perp=P_{(j_k)}Z^{(k)}-P_{(j_k)}B^\perp=P_{(j_k)}(Z^{(k)}-B^\perp).
\end{equation}
and
\begin{equation*}
    A_{:,j_k}^T(Z^{(k)}-B_\perp)=A_{:,j_k}^TZ^{(k)}=\hat{r}^{(k)}_{:,j_k}.
\end{equation*}
Taking the conditional expectation of the equation (\ref{eq5}), since $A^T_{:,j_k}B^{\perp}=0$, we can get 
\begin{align}\label{eq6}
    E_k||Z^{(k+1)}-B^\perp||_F^2 \notag
    &=E_k||P_{(j_k)}(Z^{(k)}-B^\perp)||_F^2\\ \notag
    &=E_k[trace\big((Z^{(k)}-B^\perp)^TP_{(j_k)}^TP_{(j_k)}(Z^{(k)}-B^\perp)\big)]\\ \notag
    &=E_k[trace\big((Z^{(k)}-B^\perp)^TP_{(j_k)}(Z^{(k)}-B^\perp)\big)]\\ \notag
&=\sum_{j_k=1}^n\frac{||A_{:,j_k}^TZ^{(k)}||_2^2}{||A^TZ^{(k)}||_F^2}trace\big((Z^{(k)}-B^\perp)^T(I-\frac{A_{:,j_k}A_{:,j_k}^T}{||A_{:,j_k}||_2^2})(Z^{(k)}-B^\perp)\big)\\ 
&=||Z^{(k)}-B^\perp||_F^2-\frac{1}{||A^TZ^{(k)}||_F^2}\sum_{j_k=1}^n\frac{||A_{:,j_k}^TZ^{(k)}||_2^4}{||A_{:,j_k}||_2^2}.
\end{align}
By Lemma \ref{lemma1}, we have
\begin{align} \label{eq7}\sum_{j_k=1}^n\frac{||A_{:,j_k}^TZ^{(k)}||_2^4}{||A_{:,j_k}||_2^2}\geq\frac{(\sum_{j_k=1}^n||A_{:,j_k}^TZ^{(k)}||_2^2)^2}{\sum_{j_k=1}^n||A_{:,j_k}||_2^2}=\frac{||A^TZ^{(k)}||_F^4}{||A||_F^2}.
\end{align}
Substituting (\ref{eq7}) into (\ref{eq6}), since $(Z^{(k)}-B^{\perp})_{:,j}\in R(A) (j=1,2,\cdots,p)$,  the following estimate  $||A^TZ^{(k)}-A^TB^{\perp}||_F^2\geq \lambda_{min}(A^TA)||Z^{(k)}-B^{\perp}||_F^2$ , we can obtain that
\begin{align*}
 E_k||Z^{(k+1)}-B^\perp||_F^2
&=||Z^{(k)}-B^\perp||_F^2-\frac{||A^TZ^{(k)}||_F^2}{||A||_F^2}\\
&=||Z^{(k)}-B^\perp||_F^2-\frac{||A^TZ^{(k)}-A^TB^\perp||_F^2}{||A||_F^2}\\
&\leq||Z^{(k)}-B^\perp||_F^2-\frac{\lambda_{min}(A^TA)||Z^{(k)}-B^\perp||_F^2}{||A||_F^2}\\
&=\big(1-\frac{\lambda_{min}(A^TA)}{||A||_F^2}\big)||Z^{(k)}-B^\perp||_F^2.
\end{align*}
Denote $w=1-\frac{\lambda_{min}(A^TA)}{||A||_F^2}$, because of $Z^{(0)}=B=\hat{B}+B^\perp$,  with full expectations, we have
\begin{align*}
    E||Z^{(k)}-B^\perp||_F^2
    &\leq wE||Z^{(k-1)}-B^\perp||_F^2\leq w^2E||Z^{(k-2)}-B^\perp||_F^2\leq\cdots\\
    &\leq w^{k-1}E||Z^{(1)}-B^\perp||_F^2\leq w^k||Z^{(0)}-B^\perp||_F^2=w^k||\hat{B}||_F^2.
\end{align*}
Since $||\hat{B}||_F^2\leq \lambda_{max}(A^TA)||X_*||_F^2$, then we obtain
\begin{equation*}
    E||Z^{(k)}-B^\perp||_F^2\leq w^k\lambda_{max}(A^TA)||X_*||_F^2.
\end{equation*}
\end{proof}
\begin{theorem}\label{thm2}
    Assume that the parameters $\alpha_1$ and $\beta_1$ are specified in Lemma \ref{lemma2}. Denote $\delta=1-\frac{\alpha_1^2\lambda_{min}(A^TA)}{||A||_F^2},\theta=\frac{\alpha_1\beta_1}{||A||_F^2}+\frac{1+\beta_1}{\lambda_{min}(A^TA)}, \eta_1=\delta(2\gamma^2+3\gamma+1)$, and $\eta_2=\delta\gamma(2\gamma+1)$. If $0\leq\gamma<\frac{1-\sqrt{\delta}}{2\sqrt{\delta}}$,  then for the initial matrices $Y^{(0)}=X^{(0)}$ with $X^{(0)}_{:,j}\in R(A^T)(j=1,2,\cdots, p)$ and $Z^{(0)}=B$, the iteration sequence $\{X^{(k)}\}_{k=0}^\infty$ generated by Algorithm \ref{algorithm7} converges to the least-squares solution $X_*=A^+B$. Subsequently, the solution error in expectation satisfies:
    \begin{equation*}
        E||X^{(k)}-X_*||_F^2\leq q^{(k)}(1+\xi)||X^{(0)}-X_*||_F^2+\frac{a_3}{1-q},
    \end{equation*}
    where 
$0<q=\frac{\eta_1+\sqrt{\eta_1^2+4\eta_2}}{2}<1$, $\xi=q-\eta_1\geq0$,  and $a_3=\theta \big(1-\frac{\lambda_{min}(A^TA)}{||A||_F^2}\big)^k\lambda_{max}(A^TA)||X_*||_F^2$.
\end{theorem}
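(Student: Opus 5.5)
The plan is to derive a one-step recursion for $F_k:=E\|X^{(k)}-X_*\|_F^2$ of the form $F_{k+1}\le \eta_1F_k+\eta_2F_{k-1}+a_3$ and then apply Lemma~\ref{lemma3}.

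\textbf{Step 1: compare the actual update with an exact one.} Conditioned on the past, write $X^{(k+1)}-X_*=(Y^{(k)}-X_*)+\frac{A_{i_k,:}^T(B_{i_k,:}-Z^{(k+1)}_{i_k,:}-A_{i_k,:}Y^{(k)})}{\|A_{i_k,:}\|_2^2}$. Since $B-B^\perp=\hat B=AX_*$, this splits as
\begin{equation*}
X^{(k+1)}-X_*=P^{(i_k)}(Y^{(k)}-X_*)+\frac{A_{i_k,:}^T(B^\perp-Z^{(k+1)})_{i_k,:}}{\|A_{i_k,:}\|_2^2},
\end{equation*}
where $P^{(i_k)}=I-A_{i_k,:}^TA_{i_k,:}/\|A_{i_k,:}\|_2^2$ is an orthogonal projector. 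I would use Lemma~\ref{lemma2} in reverse to separate the two pieces. Write $P^{(i_k)}(Y^{(k)}-X_*)=(X^{(k+1)}-X_*)-(\text{second term})$, so that
\begin{equation*}
\alpha_1\|P^{(i_k)}(Y^{(k)}-X_*)\|_F^2\le\|X^{(k+1)}-X_*\|_F^2+\beta_1\|\text{second term}\|_F^2 .
\end{equation*}
Equivalently, one can expand the square and bound the cross term with the weighted mean inequality. The residual-based row probabilities $\|r_{i,:}\|^2/\|r\|_F^2$ are then handled as in Lemma~\ref{thm1}. The Cauchy--Schwarz form of Lemma~\ref{lemma1} gives a contraction of the main part by $1-\alpha_1^2\lambda_{\min}/\|A\|_F^2=\delta$. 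This uses that the columns of $Y^{(k)}-X_*$ lie in $R(A^T)$, which follows by induction from $X^{(0)}_{:,j}\in R(A^T)$ and the fact that every update and momentum step stays in $R(A^T)$. The noise part collects into $\theta\|Z^{(k+1)}-B^\perp\|_F^2$, where the $\frac{1+\beta_1}{\lambda_{\min}}$ piece comes from dividing $\|r\|_F^2$-type quantities by $\|A(Y^{(k)}-X_*)\|_F^2\ge\lambda_{\min}\|Y^{(k)}-X_*\|_F^2$. Taking full expectation and applying Lemma~\ref{thm1} gives
\begin{equation*}
F_{k+1}\le\delta\,E\|Y^{(k)}-X_*\|_F^2+a_3 .
\end{equation*}

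\textbf{Step 2: momentum.} Since $Y^{(k)}-X_*=(1+\gamma)(X^{(k)}-X_*)-\gamma(X^{(k-1)}-X_*)$, expanding and bounding the cross term by $2\gamma(1+\gamma)\langle U,V\rangle\le\gamma(1+\gamma)(\|U\|^2+\|V\|^2)$ gives
\begin{equation*}
E\|Y^{(k)}-X_*\|^2\le(1+\gamma)(1+2\gamma)F_k+\gamma(1+2\gamma)F_{k-1}.
\end{equation*}
This matches $\eta_1=\delta(2\gamma^2+3\gamma+1)$ and $\eta_2=\delta\gamma(2\gamma+1)$.

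\textbf{Step 3: apply Lemma~\ref{lemma3}.} We need $\eta_1+\eta_2=\delta(4\gamma^2+4\gamma+1)=\delta(2\gamma+1)^2<1$, which is exactly $\gamma<\frac{1-\sqrt\delta}{2\sqrt\delta}$. Also $F_1=F_0$ in the sense required, via $Y^{(0)}=X^{(0)}$: one bounds $F_1$ directly by the same estimate, or sets $F_{-1}=F_0$. Lemma~\ref{lemma3} then yields the bound with $q$ and $\xi=q-\eta_1$. Because $a_3$ contains a factor decaying geometrically in $k$, the additive term vanishes as $k\to\infty$, which gives convergence.

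\textbf{Main obstacle.} The hard part is Step 1. The row probabilities depend on $r_k=B-AY^{(k)}-Z^{(k+1)}=A(X_*-Y^{(k)})+(B^\perp-Z^{(k+1)})$, which mixes the signal and the noise. Getting the clean constants $\delta$ and $\theta$ requires applying Lemma~\ref{lemma2} to $r_k$ itself: $\|r_k\|^2\ge\alpha_1\|A(Y^{(k)}-X_*)\|^2-\beta_1\|Z^{(k+1)}-B^\perp\|^2$, and the analogous lower bound for $\sum_i\|r_{i,:}\|^4/\|A_{i,:}\|^2\ge\|r\|_F^4/\|A\|_F^2$. This is where the $\alpha_1^2$ in $\delta$ and the $\alpha_1\beta_1/\|A\|_F^2$ term in $\theta$ arise. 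I would first work out this single-step inequality carefully before the momentum bookkeeping.
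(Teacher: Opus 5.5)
Your overall architecture matches the paper's: a one-step bound $E_k\|X^{(k+1)}-X_*\|_F^2\le\delta\|Y^{(k)}-X_*\|_F^2+\theta\|B^\perp-Z^{(k+1)}\|_F^2$, then the momentum expansion, then Lemma~\ref{lemma3}. Steps 2 and 3 follow the paper, and your check $\eta_1+\eta_2=\delta(2\gamma+1)^2<1$ is exactly what the range of $\gamma$ encodes. The gap is in Step 1, and it begins with its first move. The inequality $\alpha_1\|P^{(i_k)}(Y^{(k)}-X_*)\|_F^2\le\|X^{(k+1)}-X_*\|_F^2+\beta_1\|\cdot\|_F^2$ is a \emph{lower} bound on the new error, so it cannot feed an upper bound. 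Orienting Lemma~\ref{lemma2} the other way puts a factor $1/\alpha_1$ in front of $\|P^{(i_k)}(Y^{(k)}-X_*)\|_F^2$, and bounding the cross term by a weighted mean inequality puts a factor $1+t$ there. The contraction factor then becomes $\delta/\alpha_1$ or $(1+t)\delta$, which need not be below $1$ and is not the stated $\delta$. What you are missing is that no separation inequality is needed. Every column of $P^{(i_k)}(Y^{(k)}-X_*)$ is orthogonal to $A_{i_k,:}^T$, while every column of the second term is a multiple of $A_{i_k,:}^T$. Hence the cross term is zero, and $\|X^{(k+1)}-X_*\|_F^2=\|P^{(i_k)}(Y^{(k)}-X_*)\|_F^2+\|(B^\perp-Z^{(k+1)})_{i_k,:}\|_2^2/\|A_{i_k,:}\|_2^2$ holds exactly.

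Two later mechanisms are also misdescribed. After taking $E_k$, the decrease term is $\sum_i\frac{\|r_{i,:}\|_2^2}{\|r\|_F^2}\frac{\|A_{i,:}(Y^{(k)}-X_*)\|_2^2}{\|A_{i,:}\|_2^2}$. Unlike in Lemma~\ref{thm1}, the sampling weight does not match the per-row decrease, so Lemma~\ref{lemma1} cannot be applied directly. The paper first applies Lemma~\ref{lemma2} row by row: $\|A_{i,:}(X_*-Y^{(k)})\|_2^2=\|r_{i,:}+(Z^{(k+1)}-B^\perp)_{i,:}\|_2^2\ge\alpha_1\|r_{i,:}\|_2^2-\beta_1\|(Z^{(k+1)}-B^\perp)_{i,:}\|_2^2$. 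This creates the $\|r_{i,:}\|_2^4$ sum that Lemma~\ref{lemma1} needs, and it is the source of one factor $\alpha_1$ in $\alpha_1^2$ and of the $\beta_1$ in $1+\beta_1$. Only then comes the application to $\|r\|_F^2$ that you describe, which yields the second $\alpha_1$ and the term $\alpha_1\beta_1/\|A\|_F^2$. The remaining noise sum $\sum_i\frac{\|r_{i,:}\|_2^2}{\|r\|_F^2}\frac{\|(B^\perp-Z^{(k+1)})_{i,:}\|_2^2}{\|A_{i,:}\|_2^2}$ has $\|r\|_F^2$ in the denominator, not $\|A(Y^{(k)}-X_*)\|_F^2$. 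The paper uses that the columns of $B^\perp-Z^{(k+1)}$ lie in $R(A)$; this follows by induction from $Z^{(0)}-B^\perp=\hat B$, since each $P_{(j)}$ preserves $R(A)$. It then writes $r=AW$ with $W=X_*-Y^{(k)}+A^+(B^\perp-Z^{(k+1)})$, whose columns lie in $R(A^T)$. The estimates $\|r_{i,:}\|_2^2\le\|A_{i,:}\|_2^2\|W\|_F^2$ and $\|r\|_F^2\ge\lambda_{\min}(A^TA)\|W\|_F^2$ then give the bound $\|B^\perp-Z^{(k+1)}\|_F^2/\lambda_{\min}(A^TA)$. Finally, a flaw you share with the paper: Lemma~\ref{lemma3} needs a constant $a_3$. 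Here the forcing term in the step producing $F_j$ is $\theta w^{j}\lambda_{\max}(A^TA)\|X_*\|_F^2$ with $w=1-\lambda_{\min}(A^TA)/\|A\|_F^2$, which is larger at earlier steps, so using its final value throughout is not justified. Unrolling $F_{k+1}+\xi F_k\le q(F_k+\xi F_{k-1})+c_{k+1}$ with step-dependent $c_j$ still gives convergence, with an additive term of order $\sum_{j\le k}q^{k-j}w^j$.
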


\begin{proof}
Let $B=B^\perp+\hat{B}$, where $\hat{B}=AX_*\in R(A)$. From the iteration formula of $X^{(k)}$, we can get 
\begin{align*}
    X^{(k+1)}-X_*
    &=Y^{(k)}-X_*+\frac{A_{i_k,:}^T(B_{i_k,:}-A_{i_k,:}Y^{(k)}-Z_{i_k,:}^{(k+1)})}{||A_{i_k,:}||_2^2}\\
    &=Y^{(k)}-X_*+\frac{A_{i_k,:}^T(\hat{B}_{i_k,:}-A_{i_k,:}Y^{(k)}+B_{i_k,:}^\perp-Z_{i_k,:}^{(k+1)})}{||A_{i_k,:}||_2^2}\\
    &=Y^{(k)}-X_*-\frac{(A_{i_k,:})^TA_{i_k,:}(Y^{(k)}-X_*)}{||A_{i_k,:}||_2^2}+\frac{(A_{i_k,:})^T(B_{i_k,:}^\perp-Z_{i_k,:}^{(k+1)})}{||A_{i_k,:}||_2^2}\\
    &=\big(I-\frac{(A_{i_k,:})^TA_{i_k,:}}{||A_{i_k,:}||_2^2}\big)(Y^{(k)}-X_*)+\frac{(A_{i_k,:})^T(B_{i_k,:}^\perp-Z_{i_k,:}^{(k+1)})}{||A_{i_k,:}||_2^2}.\\
\end{align*}
By calculation, we know that 
\begin{equation*}
    A_{i_k,:}\big(I-\frac{(A_{i_k,:})^TA_{i_k,:}}{||A_{i_k,:}||_2^2}\big)(Y^{(k)}-X_*)=0.
\end{equation*}
Thus, we can get $\big(I-\frac{(A_{i_k,:})^TA_{i_k,:}}{||A_{i_k,:}||_2^2}\big)(Y^{(k)}-X_*)$ is orthogonal to
$\frac{(A_{i_k,:})^T(B_{i_k,:}^\perp-Z_{i_k,:}^{(k+1)})}{||A_{i_k,:}||_2^2}$.
Taking the Frobenius norm on both sides yields
\begin{align*}
    ||X^{(k+1)}-X_*||_F^2
    &=||\big(I-\frac{(A_{i_k,:})^TA_{i_k,:}}{||A_{i_k,:}||_2^2}\big)(Y^{(k)}-X_*)||_F^2+||\frac{(A_{i_k,:})^T(B_{i_k,:}^\perp-Z_{i_k,:}^{(k+1)})}{||A_{i_k,:}||_2^2}||_F^2\\
    &=||\big(I-\frac{(A_{i_k,:})^TA_{i_k,:}}{||A_{i_k,:}||_2^2}\big)(Y^{(k)}-X_*)||_F^2+\frac{||(A_{i_k,:})^T(B_{i_k,:}^\perp-Z_{i_k,:}^{(k+1)})||_F^2}{||A_{i_k,:}||_2^4}\\
    &\leq ||\big(I-\frac{(A_{i_k,:})^TA_{i_k,:}}{||A_{i_k,:}||_2^2}\big)(Y^{(k)}-X_*)||_F^2+\frac{||B_{i_k,:}^\perp-Z_{i_k,:}^{(k+1)}||_F^2}{||A_{i_k,:}||_2^2}\\
\end{align*}
Here, for the last inequality, since $||(A_{i_k,:})^T(B_{i_k,:}^\perp-Z_{i_k,:}^{(k+1)})||_F^2\leq ||A_{i_k,:}||_2^2||B_{i_k,:}^\perp-Z_{i_k,:}^{(k+1)}||_2^2$ is hold.

Taking the conditional expectation of the $k$-th iterations on both sides of the above equation, we can get
\begin{align*}
    E_k||X^{(k+1)}-X_*||_F^2
    &=E_k||\big(I-\frac{(A_{i_k,:})^TA_{i_k,:}}{||A_{i_k,:}||_2^2}\big)(Y^{(k)}-X_*)||_F^2+E_k\frac{||B_{i_k,:}^\perp-Z_{i_k,:}^{(k+1)}||_F^2}{||A_{i_k,:}||_2^2}\\  &=\sum_{i_k=1}^m\frac{||r_{i_k,:}||_2^2}{||r||_F^2}\big(trace\big((Y^{(k)}-X_*)^T(I-\frac{(A_{i_k,:})^TA_{i_k,:}}{||A_{i_k,:}||_2^2})(Y^{(k)}-X_*)\big)\big)\\
&+\sum_{i_k=1}^m\frac{||r_{i_k,:}||_2^2}{||r||_F^2}\frac{||B_{i_k,:}^\perp-Z_{i_k,:}^{(k+1)}||_F^2}{||A_{i_k,:}||_2^2}\\
    &=\sum_{i_k=1}^m\frac{||r_{i_k,:}||_2^2}{||r||_F^2}\big(trace\big((Y^{(k)}-X_*)^T(Y^{(k)}-X_*)\big)\big)\\
&+\sum_{i_k=1}^m\frac{||r_{i_k,:}||_2^2}{||r||_F^2}\frac{||B_{i_k,:}^\perp-Z_{i_k,:}^{(k+1)}||_F^2}{||A_{i_k,:}||_2^2}\\
    &-\sum_{i_k=1}^m\frac{||r_{i_k,:}||_2^2}{||r||_F^2}\big(trace\big((Y^{(k)}-X_*)^T\frac{(A_{i_k,:})^TA_{i_k,:}}{||A_{i_k,:}||_2^2}(Y^{(k)}-X_*)\big)\big)\\
    &=||Y^{(k)}-X_*||_F^2-\sum_{i_k=1}^m\frac{||r_{i_k,:}||_2^2}{||r||_F^2}\frac{||A_{i_k,:}(Y^{(k)}-X_*)||_F^2}{||A_{i_k,:}||_2^2}\\
&+\sum_{i_k=1}^m\frac{||r_{i_k,:}||_2^2}{||r||_F^2}\frac{||B_{i_k,:}^\perp-Z_{i_k,:}^{(k+1)}||_2^2}{||A_{i_k,:}||_2^2}\\
    &=||Y^{(k)}-X_*||_F^2-\sum_{i_k=1}^m\frac{||r_{i_k,:}||_2^2}{||r||_F^2}\frac{||\hat{B}_{i_k,:}-A_{i_k,:}Y^{(k)}||_2^2}{||A_{i_k,:}||_2^2}\\
&+\sum_{i_k=1}^m\frac{||r_{i_k,:}||_2^2}{||r||_F^2}\frac{||B_{i_k,:}^{\perp}-Z_{i_k,:}^{(k+1)}||_2^2}{||A_{i_k,:}||_2^2}\\
    &=||Y^{(k)}-X_*||_F^2+\sum_{i_k=1}^m\frac{||r_{i_k,:}||_2^2}{||r||_F^2}\frac{||B_{i_k,:}^\perp-Z_{i_k,:}^{(k+1)}||_2^2}{||A_{i_k,:}||_2^2}\\
&-\sum_{i_k=1}^m\frac{||r_{i_k,:}||_2^2}{||r||_F^2}\frac{||B_{i_k,:}^\perp+\hat{B}_{i_k,:}-A_{i_k,:}Y^{(k)}-Z_{i_k,:}^{(k+1)}+Z_{i_k,:}^{(k+1)}-B_{i_k,:}^\perp||_2^2}{||A_{i_k,:}||_2^2}.\\
\end{align*}
It follows from Lemma \ref{lemma2} that
\begin{align}\label{eq9}
     E_k||X^{(k+1)}-X_*||_F^2 \notag
     &\leq ||Y^{(k)}-X_*||_F^2+\sum_{i_k=1}^m\frac{||r_{i_k,:}||_2^2}{||r||_F^2}\frac{||B_{i_k,:}^\perp-Z_{i_k,:}^{(k+1)}||_2^2}{||A_{i_k,:}||_F^2} \\ \notag
     &-\sum_{i_k=1}^m\frac{||r_{i_k,:}||_2^2}{||r||_F^2}\frac{\alpha_1||B_{i_k,:}-A_{i_k,:}Y^{(k)}-Z_{i_k,:}^{(k+1)}||_2^2}{||A_{i_k,:}||_2^2}\\ \notag
     &+\sum_{i_k=1}^m\frac{||r_{i_k,:}||_2^2}{||r||_F^2}\frac{\beta_1||Z_{i_k,:}^{(k+1)}-B_{i_k,:}^\perp||_2^2}{||A_{i_k,:}||_2^2}\\ \notag
     &=||Y^{(k)}-X_*||_F^2-\alpha_1\frac{1}{||r||_F^2}\sum_{i_k=1}^m\frac{||r_{i_k,:}||_2^4}{||A_{i_k,:}||_2^2}\\ 
     &+(1+\beta_1)\sum_{i_k=1}^m\frac{||r_{i_k,:}||_2^2}{||r||_F^2}\frac{||B_{i_k,:}^\perp-Z_{i_k,:}^{(k+1)}||_2^2}{||A_{i_k,:}||_2^2}. 
\end{align}
We split the last formula of (\ref{eq9}) into three parts. Then, the second part can lead to the estimate
\begin{align*}
  \frac{1}{||r||_F^2}\sum_{i_k=1 }^m\frac{||r_{i_k,:}||_2^4}{||A_{i_k,:}||_2^2}
&\geq\frac{1}{||r||_F^2} \frac{(\sum_{i_k=1 }^m||r_{i_k,:}||_2^2)^2}{\sum_{i_k=1 }^m||A_{i_k,:}||_2^2}\\
&=\frac{1}{||r||_F^2}\frac{||r||_F^4}{||A||_F^2}=\frac{||r||_F^2}{||A||_F^2}\\
   & =\frac{||B-AY^{(k)}-Z^{(k+1)}||_F^2}{||A||_F^2}\\
&\geq \alpha_1\frac{||\hat{B}-AY^{(k)}||_F^2}{||A||_F^2}-\beta_1\frac{||B^\perp-Z^{(k+1)}||_F^2}{||A||_F^2}\\
    &=\alpha_1\frac{||A(Y^{(k)}-X_*)||_F^2}{||A||_F^2}-\beta_1\frac{||B^\perp-Z^{(k+1)}||_F^2}{||A||_F^2}\\
&\geq\alpha_1\frac{\lambda_{min}(A^TA)||Y^{(k)}-X_*||_F^2}{||A||_F^2}-\beta_1\frac{||B^\perp-Z^{(k+1)}||_F^2}{||A||_F^2}.
\end{align*}
where the first inequality follows from Lemma \ref {lemma1},  the second inequality is obtained by  Lemma \ref {lemma2}, and the following estimate
$||Az||_F^2\geq \lambda_{min}(A^TA)||z||_F^2$ is used. Since $(Y^{(k)}-X_*)_{:,j}\in R(A^T)$ (as proved in Section 1 $(i)$ of the supplementary file ),  the last inequality holds.

For the third part,  because of $(B^\perp-Z^{(k+1)})_{:,j}\in R(A)$, there  exists a vector $\hat{X}$ such that $B^\perp-Z^{(k+1)}=A\hat{X}$. Let $\hat{X}=A^+(B^\perp-Z^{(k+1)})$, then it results in
\begin{align*}
\sum_{i_k=1}^m\frac{||r_{i_k,:}||_2^2}{||r||_F^2}\frac{||B_{i_k,:}^\perp-Z_{i_k,:}^{(k+1)}||_2^2}{||A_{i_k,:}||_2^2}
    &= \sum_{i_k=1}^m\frac{||\hat{B}_{i_k,:}-A_{i_k,:}Y^{(k)}+B_{i_k,:}^\perp-Z_{i_k,:}^{(k+1)}||_2^2}{||r||_F^2}\frac{||B_{i_k,:}^\perp-Z_{i_k,:}^{(k+1)}||_2^2}{||A_{i_k,:}||_2^2}\\
    &=\sum_{i_k=1}^m\frac{||A_{i_k,:}(X_*-Y^{(k)}+\hat{X})||_2^2}{||r||_F^2}\frac{||B_{i_k,:}^\perp-Z_{i_k,:}^{(k+1)}||_2^2}{||A_{i_k,:}||_2^2}\\
    &\leq \sum_{i_k=1}^m\frac{||A_{i_k,:}||_2^2||X_*-Y^{(k)}+\hat{X}||_F^2}{||r||_F^2}\frac{||B_{i_k,:}^\perp-Z_{i_k,:}^{(k+1)}||_2^2}{||A_{i_k,:}||_2^2}\\
    &=\sum_{i_k=1}^m\frac{||X_*-Y^{(k)}+\hat{X}||_F^2}{||A(X_*-Y^{(k)}+\hat{X})||_F^2}||B_{i_k,:}^\perp-Z_{i_k,:}^{(k+1)}||_2^2\\
    &\leq \frac{||X_*-Y^{(k)}+\hat{X}||_F^2}{\lambda_{min}(A^TA)||X_*-Y^{(k)}+\hat{X}||_F^2}\sum_{i_k=1}^m ||B_{i_k,:}^\perp-Z_{i_k,:}^{(k+1)}||_2^2\\
    &\leq \frac{||B^\perp-Z^{(k+1)}||_F^2}{\lambda_{min}(A^TA)}
\end{align*}
 Here, since $(X_*-Y^{(k)}+\hat{X})_{:,j}\in R(A^T)$ (as proved in Section 1 $(ii)$ of the supplementary file ), the second inequality $||A(X_*-Y^{(k)}+\hat{X})||_F^2\geq\lambda_{min}(A^TA)||X_*-Y^{(k)}+\hat{X}||_F^2$ holds. Thus we have
\begin{align}\label{eq10}
     E_k||X^{(k+1)}-X_*||_F^2 \notag
     &\leq 
    ||Y^{(k)}-X_*||_F^2-\alpha_1(\alpha_1\frac{\lambda_{min}(A^TA)||Y^{(k)}-X_*||_F^2}{||A||_F^2}-\beta_1\frac{||B^\perp-Z^{(k+1)}||_F^2}{||A||_F^2})\\ \notag
    &+(1+\beta_1)\frac{1}{\lambda_{min}(A^TA)}||B^\perp-Z^{(k+1)}||_F^2\\ \notag
    &=(1-\frac{\alpha_1^2\lambda_{min}(A^TA)}{||A||_F^2})||Y^{(k)}-X_*||_F^2+(\frac{\alpha_1\beta_1}{||A||_F^2}+\frac{1+\beta_1}{\lambda_{min}(A^TA)})||B^\perp-Z^{(k+1)}||_F^2\\ \notag
    &=\delta||X^{(k)}+\gamma(X^{(k)}-X^{(k-1)})-X_*||_F^2+\theta||B^\perp-Z^{(k+1)}||_F^2\\ \notag
    &=\delta||(1+\gamma)X^{(k)}-\gamma X^{(k-1)}-(1+\gamma-\gamma)X_*||_F^2+\theta||B^\perp-Z^{(k+1)}||_F^2\\ \notag
    &=\delta||(1+\gamma)(X^{(k)}-X_*)-\gamma(X^{(k-1)}-X_*)||_F^2+\theta||B^\perp-Z^{(k+1)}||_F^2\\ \notag
    &=\delta(1+\gamma)^2||X^{(k)}-X_*||_F^2+\gamma^2||X^{(k-1)}-X_*||_F^2\\ \notag
&-2 \delta\gamma(1+\gamma)trace[(X^{(k)}-X_*)^T(X^{(k-1)}-X_*)]
    +\theta||B^\perp-Z^{(k+1)}||_F^2\\ \notag
     &\leq \delta(1+\gamma)^2||X^{(k)}-X_*||_F^2+\gamma^2||X^{(k-1)}-X_*||_F^2\\ \notag
    &+\gamma(1+\gamma)(||X^{(k)}-X_*||_F^2+||X^{(k-1)}-X_*||_F^2)
    +\theta||B^\perp-Z^{(k+1)}||_F^2\\ \notag
    &=\delta[(1+\gamma)^2+\gamma(1+\gamma)]||X^{(k)}-X_*||_F^2+\theta||B^\perp-Z^{(k+1)}||_F^2\\ \notag
    &+ \delta(\gamma^2+\gamma(\gamma+1))||X^{(k-1)}-X_*||_F^2\\ \notag
    &=\delta(2\gamma^2+3\gamma+1)||X^{(k)}-X_*||_F^2\\ \notag
    &+\delta\gamma(2\gamma+1)|X^{(k-1)}-X_*||_F^2+\theta||B^\perp-Z^{(k+1)}||_F^2\\ 
    &=\eta_1||X^{(k)}-X_*||_F^2+\eta_2||X^{(k-1)}-X_*||_F^2+\theta||B^\perp-Z^{(k+1)}||_F^2.
\end{align}
where the second inequality uses the parallelogram identity $2\langle C,D\rangle_F = \|C\|_F^2 - \|C-D\|_F^2+ \|D\|^2_F$ and the inequality
$\|C -D\|_F^2 \leq \left(\|C\|_F + \|D\|_F\right)^2 \leq 2\|C\|_F^2 + 2\|D\|_F^2$.

From the expression of $\delta$, it is easy to obtain that $0<\delta<1$.  By taking full expectation on both sides of the above inequality (\ref{eq10}), we
can get
\begin{equation*}
    E||X^{(k+1)}-X_*||_F^2\leq \eta_1E||X^{(k)}-X_*||_F^2+\eta_2E||X^{(k-1)}-X_*||_F^2+\theta E||B^\perp-Z^{(k+1)}||_F^2.
\end{equation*}
Thus, by Lemma \ref{thm1}, we have
\begin{align*}
    E||X^{(k)}-X_*||_F^2
    &\leq \eta_1E||X^{(k-1)}-X_*||_F^2+\eta_2E||X^{(k-2)}-X_*||_F^2+\theta E||B^\perp-Z^{(k)}||_F^2\\
    &\leq \eta_1E||X^{(k-1)}-X_*||_F^2+\eta_2E||X^{(k-2)}-X_*||_F^2+\theta w^k\lambda_{max}(A^TA)||X_*||_F^2\\
    &=\eta_1E||X^{(k-1)}-X_*||_F^2+\eta_2E||X^{(k-2)}-X_*||_F^2+a_3.
\end{align*}
Let $a_3=\theta\big(1-\frac{\lambda_{min}(A^TA)}{||A||_F^2}\big)^k\lambda_{max}(A^TA)||X_*||_F^2>0$. Since the momentum parameter satisfies $0\leq\gamma<\frac{1-\sqrt{\delta}}{2\sqrt{\delta}}$ (as proved in Section 2 of the supplementary file), we have $\eta_1 \geq 0 $, $ \eta_2 \geq 0 $, $0<\eta_1+\eta_2<1$ , and at least one of the coefficients $ \eta_1 $ and $\eta_2 $ is positive. Meanwhile, Algorithm \ref{algorithm7} is equivalent to the iteration scheme with an incremented index, provided that the initial values satisfy $X^{(1)}=X^{(0)}$. Thus by Lemma \ref{lemma3}, the conclusion follows
\begin{equation*} 
    E||X^{(k)}-X_*||_F^2\leq q^{(k)}(1+\xi)E||X^{(0)}-X_*||_F^2+\frac{a_3}{1-q},
\end{equation*}
where $0<q=\frac{\eta_1+\sqrt{\eta_1^2+4\eta_2}}{2}<1$ and $\xi=q-\eta_1\geq0$.
\end{proof}
\begin{corollary}
	Assume that the parameters $\alpha_1$ and $\beta_1$ are specified in Lemma \ref{lemma2}. Denote $\delta=1-\frac{\alpha_1^2\lambda_{min}(A^TA)}{||A||_F^2},\theta=\frac{\alpha_1\beta_1}{||A||_F^2}+\frac{1+\beta_1}{\lambda_{min}(A^TA)}$  If the initial matrices $X^{(0)}$ satisfies $X^{(0)}_{:,j}\in R(A^T)(j=1,2,\cdots, p)$ and $Z^{(0)}=B$, the iteration sequence $\{X^{(k)}\}_{k=0}^\infty$ generated by Algorithm \ref{algorithm6} converges to the least-squares solution $X_*=A^+B$. Moreover, the solution error in expectation satisfies:
	\begin{equation*}
		E||X^{(k)}-X_*||_F^2\leq \delta^{(k)}||X^{(0)}-X_*||_F^2+\frac{a_3}{1-\delta},
	\end{equation*}
	where 
	 $a_3=\theta \big(1-\frac{\lambda_{min}(A^TA)}{||A||_F^2}\big)^k\lambda_{max}(A^TA)||X_*||_F^2$.
\end{corollary}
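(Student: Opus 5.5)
The plan is to treat Algorithm \ref{algorithm6} as the special case $\gamma=0$ of Algorithm \ref{algorithm7}. In that case $Y^{(k)}=X^{(k)}$ for all $k$, and the two iterations coincide. The $Z$-sequence is the same in both algorithms, so Lemma \ref{thm1} applies verbatim and gives
\[
E\|Z^{(k)}-B^\perp\|_F^2\le w^k\lambda_{\max}(A^TA)\|X_*\|_F^2, \qquad w=1-\frac{\lambda_{\min}(A^TA)}{\|A\|_F^2}.
\]
I would then reuse the one-step estimate from the proof of Theorem \ref{thm2} up to the line before the momentum is expanded:
\[
E_k\|X^{(k+1)}-X_*\|_F^2\le \delta\|Y^{(k)}-X_*\|_F^2+\theta\|B^\perp-Z^{(k+1)}\|_F^2 .
\]
With $Y^{(k)}=X^{(k)}$ this becomes a one-term recursion, so the two-term recursion of Lemma \ref{lemma3} is not needed.

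That step relies on two invariants, which I would check directly.
\begin{itemize}
\item $(X^{(k)}-X_*)_{:,j}\in R(A^T)$. This holds by induction: $X^{(0)}$ has columns in $R(A^T)$, $X_*=A^+B$ does too, and each update adds a multiple of $A_{i_k,:}^T$.
\item $(B^\perp-Z^{(k+1)})_{:,j}\in R(A)$. This holds because $Z^{(0)}-B^\perp=\hat B$ and each projection $P_{(j_k)}$ keeps $R(A)$ invariant.
\end{itemize}
These invariants justify the $\lambda_{\min}$ lower bounds used for the second and third parts of (\ref{eq9}). I would also note that $0<\delta<1$, since $\alpha_1\in(0,1)$ and $\lambda_{\min}(A^TA)\le\|A\|_F^2$.

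Taking full expectation and inserting Lemma \ref{thm1} gives
\[
E\|X^{(k+1)}-X_*\|_F^2\le \delta E\|X^{(k)}-X_*\|_F^2+\theta w^{k+1}\lambda_{\max}(A^TA)\|X_*\|_F^2 .
\]
Since $w\le 1$, every forcing term $\theta w^{i}\lambda_{\max}\|X_*\|_F^2$ for $i\ge 1$ is at most $\theta w\lambda_{\max}\|X_*\|_F^2$. Unrolling the recursion then yields
\[
E\|X^{(k)}-X_*\|_F^2\le \delta^k\|X^{(0)}-X_*\|_F^2+\sum_{i=0}^{k-1}\delta^i\cdot\bigl(\text{forcing}\bigr).
\]
A geometric bound of the forcing sum by $\frac{1}{1-\delta}$ times the forcing term produces the stated form with $a_3$. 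Alternatively, one can invoke Lemma \ref{lemma3} with $a_1=\delta$, $a_2=0$: then $q=\delta$ and $\epsilon=0$, which gives the same bound directly, matching the paper's convention for $a_3$.

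The main obstacle is the bookkeeping of $a_3$, which depends on $k$. If the forcing term is bounded uniformly by its largest value, the result only has $w$ rather than $w^k$ in $a_3$. To get $w^k$ honestly, I would compare rates instead. The sum $\sum_i \delta^{k-1-i}w^{i+1}$ is at most $k\max(\delta,w)^{k}$, and $\delta\ge w$ because $\alpha_1^2<1$. This gives decay of the remainder in any case, which yields convergence of the iterates to $X_*$.

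If the exact displayed constant must be matched, I would follow the paper's convention in Theorem \ref{thm2} and apply Lemma \ref{lemma3} with the $k$-indexed $a_3$ treated as a constant bound over the relevant range. I would flag this as the step needing care.
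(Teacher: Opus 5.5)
Your reduction is the same as the paper's: its proof of the corollary is the single line ``set $\gamma=0$ in Theorem~\ref{thm2}''. Everything you do up to the one-step recursion is correct, namely the two range invariants, $0<\delta<1$, and Lemma~\ref{thm1} giving the forcing term $\theta w^{k+1}\lambda_{\max}(A^TA)\|X_*\|_F^2$. The gap is the final step, and it is not a bookkeeping issue. Write $e_k=E\|X^{(k)}-X_*\|_F^2$ and $C=\theta\lambda_{\max}(A^TA)\|X_*\|_F^2$. Unrolling $e_{i+1}\le \delta e_i+Cw^{i+1}$ gives exactly
\[
e_k\le \delta^k e_0+C\sum_{i=0}^{k-1}\delta^{k-1-i}w^{i+1}=\delta^k e_0+Cw\,\frac{\delta^k-w^k}{\delta-w},\qquad \delta-w=\frac{(1-\alpha_1^2)\lambda_{\min}(A^TA)}{\|A\|_F^2}>0.
\]
The ratio of this remainder to the claimed $a_3/(1-\delta)=Cw^k/(1-\delta)$ is $\frac{\alpha_1^2w}{1-\alpha_1^2}\bigl((\delta/w)^k-1\bigr)\to\infty$ (for $w>0$). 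A sequence satisfying the recursion with equality therefore violates the displayed inequality for large $k$. So no argument based on this recursion can put $w^k$ inside $a_3$.

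Both of your fallbacks fail for the same reason: $w^k$ is the \emph{smallest} of the forcing terms $w^1,\dots,w^k$, not a uniform bound for them. Hence ``bound the forcing sum by $\frac{1}{1-\delta}$ times the forcing term'' is false. Likewise, Lemma~\ref{lemma3} with $a_1=\delta$, $a_2=0$ requires a $k$-independent $a_3$. Adopting ``the paper's convention'' means copying exactly the flaw in the proof of Theorem~\ref{thm2}, where Lemma~\ref{lemma3} is applied with an $a_3$ that depends on $k$. The corollary's one-line proof inherits that flaw.

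Here is what your setup does prove honestly. With the constant $a_3=\theta w\lambda_{\max}(A^TA)\|X_*\|_F^2$, Lemma~\ref{lemma3} gives the displayed shape, but the remainder does not vanish, so it does not yield convergence. The closed form above, using $\delta^k-w^k\le\delta^k$, gives
\[
E\|X^{(k)}-X_*\|_F^2\le \delta^k\Bigl(\|X^{(0)}-X_*\|_F^2+\frac{w\,\theta\lambda_{\max}(A^TA)\|X_*\|_F^2}{\delta-w}\Bigr).
\]
This is linear convergence at rate $\delta$, and your $k\delta^k$ estimate is a weaker form of it. It establishes the convergence claim of the corollary. The inequality as stated should be replaced by this one rather than ``matched''.
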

\begin{proof}

Setting $\gamma=0$ in Theorem \ref{thm2}, we obtain the conclusion.
\end{proof}
	
\section{Numerical experiments}\label{sec3}
In this section, we test the performance of these four methods: REKIAX,  REGSIAX, DREK and  MDREK. All our experiments are completed using the software Matlab (Version R2024a). The computer used is equipped with a Windows 11 operating system, 16G memory, and a 2.60GHz central processing unit (13th Gen Intel(R) Core(TM) i7-13650HX).

 For the inconsistent system \eqref{eq2}, we set \( B = AX_* + R_1 \) and $X_* = \text{pinv}(A)* B$, where $R_1=10^{-5}\times randn(m,p)$. The number of IT and the CPU are used as the main data for comparison with the tested methods. All experiments start from $X_0=0$ and $Z_0=B$, and the experiments stop iterating when the relative solution error (RSE) satisfies 
\begin{equation*}
	RES=\frac{||X_k-X_*||_F^2}{||X_*||_F^2}\leq 10^{-6}
\end{equation*}
or the iteration steps exceed $5\times 10^4$. The real-world sparse data come from the Florida sparse matrix collection\cite{Collection}. Table \ref{tab4}
lists the features of these sparse matrices. The density is defined as
\begin{equation*}
	density=\frac{the\ number\ of \ non-zero\ elements\ of\ a\ m-by-n\ matrix}{mn},
\end{equation*}
which indicates the sparsity of the corresponding matrix.
To ensure the reliability of the experimental results,
the number of iterations and  computation time in the numerical results represent the average values of 10 repeated runs.
%%合成数据实验
\begin{example}\label{em1} Random matrix. We consider a series of matrices $A$ generated via MATLAB's built-in functions in two ways: (1) $A = \text{randn}(m,n) $; (2) rank-deficient matrices $ A$ constructed as $ A = \text{randn}(m/2,n) $ followed by $ A = [A, A] $. For both cases, we set $ X = \text{randn}(n,p) $, $ B = AX + R$, and $R = \mu \times \text{randn}(m,p) $ with $\mu = 1 \times 10^{-5}$. For the momentum parameter $\beta $, we set its search range to $ (0,1) $ with a step size of $0.02 $. We iteratively test different values of $\beta$, record the final residual of the MDREK method, and select the value of $\beta$ corresponding to the minimum final residual as the momentum parameter for the corresponding matrix dimension.
	
	The numerical results of all methods are reported in Tables \ref{tab2}, \ref{tab3} and Figs \ref{fig1}$-$\ref{fig3}. From them, it is easy to see that whether the coefficient matrix A is full rank or rank-deficient, both the proposed DREK  and MDREK methods converge much faster than the other  two methods. In particular, the MDREK method requires the least number of IT and CPU time.
\end{example}
%biao2
\begin{table}
	\centering
	\caption{The average of IT and CPU of the four methods for Example \ref{em1} with full rank $A$}
	\label{tab2}
	\begin{tabular}{ccccccccc}
		\hline
		m&n&p& rank(A)& &REGSIAX &REKIAX&DREK&MDREK\\
		\hline
		30&50&30&30&IT&3708&3765&1747&$\mathbf{1509}$ ($\beta=0.75$) \\
		& & & &CPU&0.0738&0.0767&0.0475&$\mathbf{0.0404}$($\beta=0.75$) \\  
		50&30&30&30&IT&3960&3934&1771&$\mathbf{1502}$($\beta=0.51$) \\
		& & & &CPU&0.0746&0.0753&0.0451&$\mathbf{0.0395}$($\beta=0.51$) \\ 
		
		60&80&60&60&IT&16974&16847&8408&$\mathbf{7110}$($\beta=0.59$)\\ & & & &CPU&0.6247&0.6319&0.5721&$\mathbf{0.5197}$($\beta=0.59$)\\
		80&60&60&60&IT&25507&25166&10811&$\mathbf{8980}$($\beta=0.67$)\\ & & & &CPU&0.8258&0.9081&0.7516&$\mathbf{0.6338}$($\beta=0.67$)\\
		80&100&80&80&IT&42411&42641&19711&$\mathbf{17443}$($\beta=0.25$)\\
		& & & &CPU&1.9544&1.9774&1.8544&$\mathbf{1.7746}$($\beta=0.25$)\\
		100&80&80&80&IT&47503&47830&22265&$\mathbf{19857}$($\beta=0.27$)\\
		& & & &CPU&2.0212&2.1262&2.0039&$\mathbf{1.9510}$($\beta=0.27$)\\
		\hline
	\end{tabular}
\end{table}
%图1
\begin{figure}[H]   %[!t]:位置参数，指定图形应该放置在页面的顶部（top）
	\centering
	\includegraphics[width=5in]{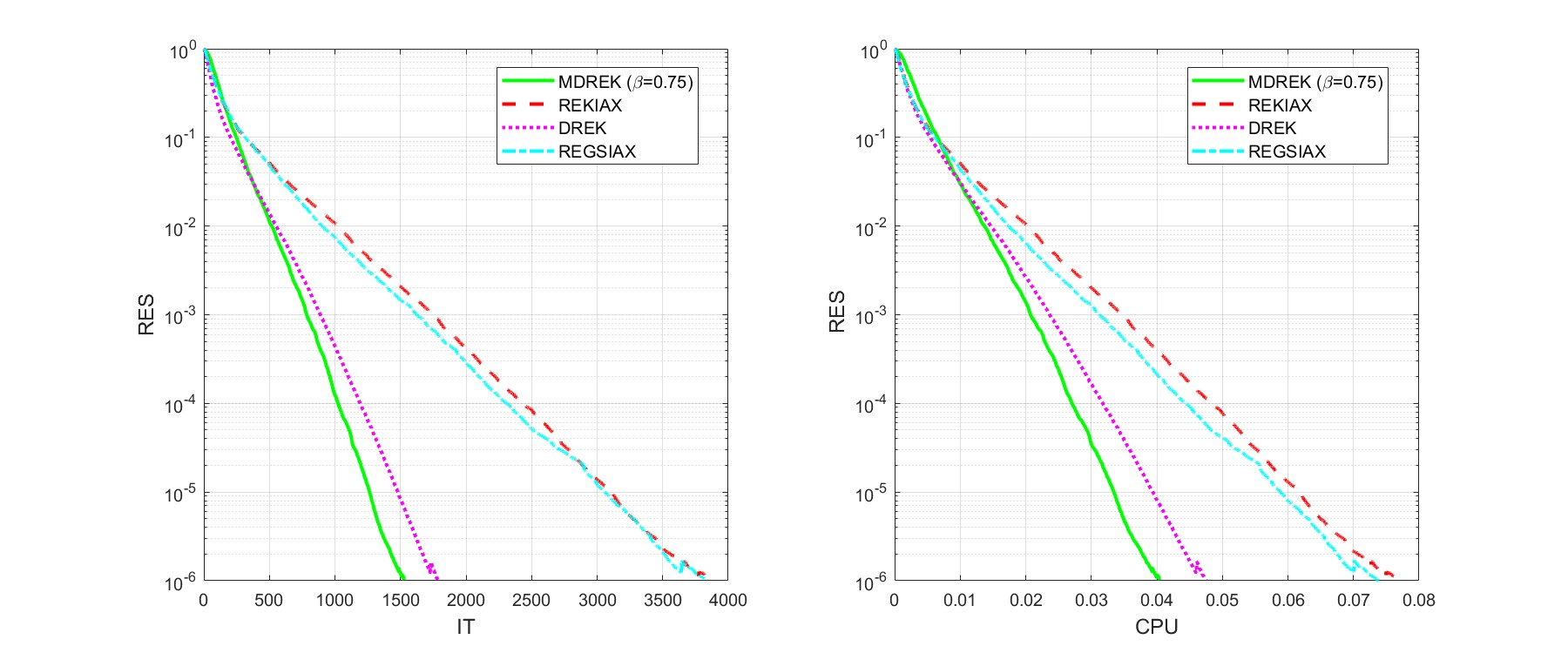}
	\caption[d]{The left is RES vs IT, and the right is RES vs CPU(s) for four different methods in Example \ref{em1} with m=30, n=50, p=30, rank(A)=30,  $\beta=0.75$
	}\label{fig1}
\end{figure}
%图2
\begin{figure}[!h]   %[!t]:位置参数，指定图形应该放置在页面的顶部（top）
	\centering
	\includegraphics[width=5in]{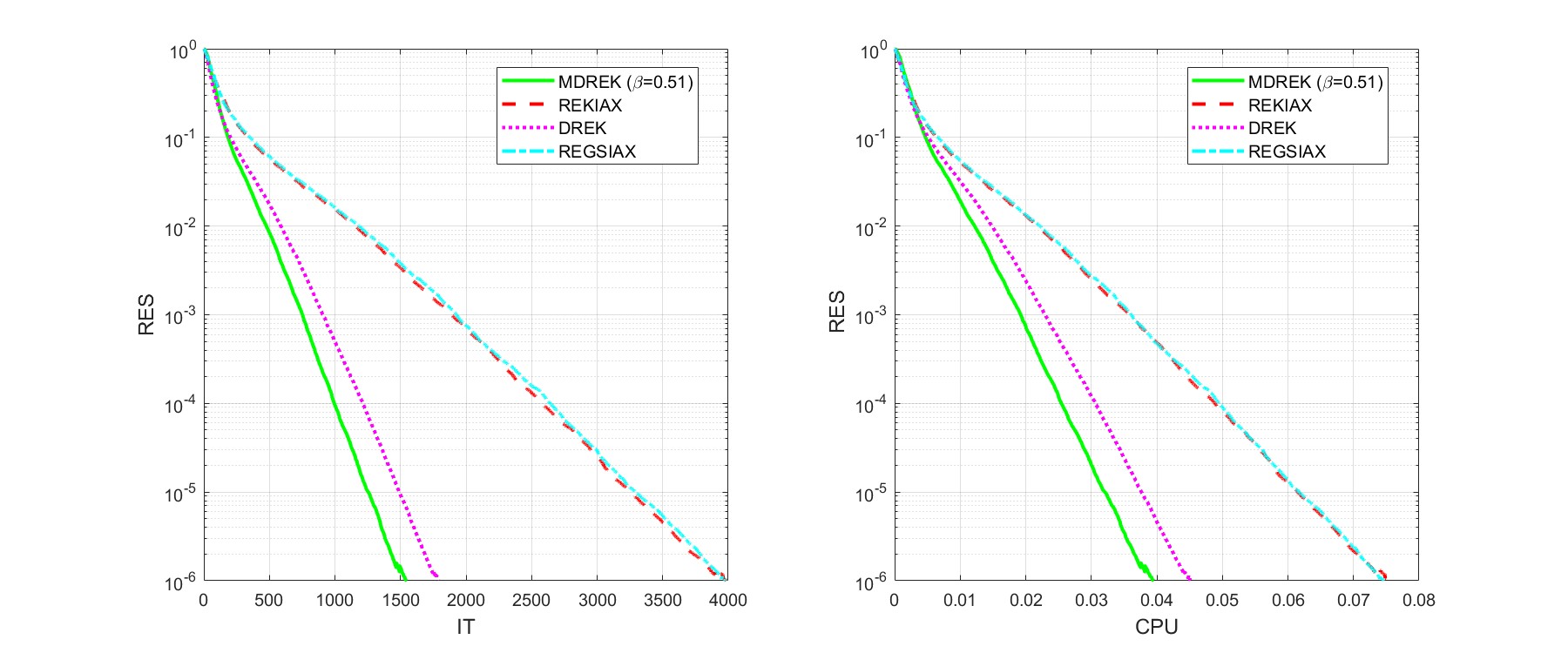}
	\caption[d]{The left is RES vs IT, and the right is RES vs CPU(s) for four different methods in Example \ref{em1} with  m=50, n=30, p=30, rank(A)=30, $\beta=0.51$
	}\label{fig2}
\end{figure}
%图3
\begin{figure}[!h]   %[!t]:位置参数，指定图形应该放置在页面的顶部（top）
	\centering
	\includegraphics[width=5in]{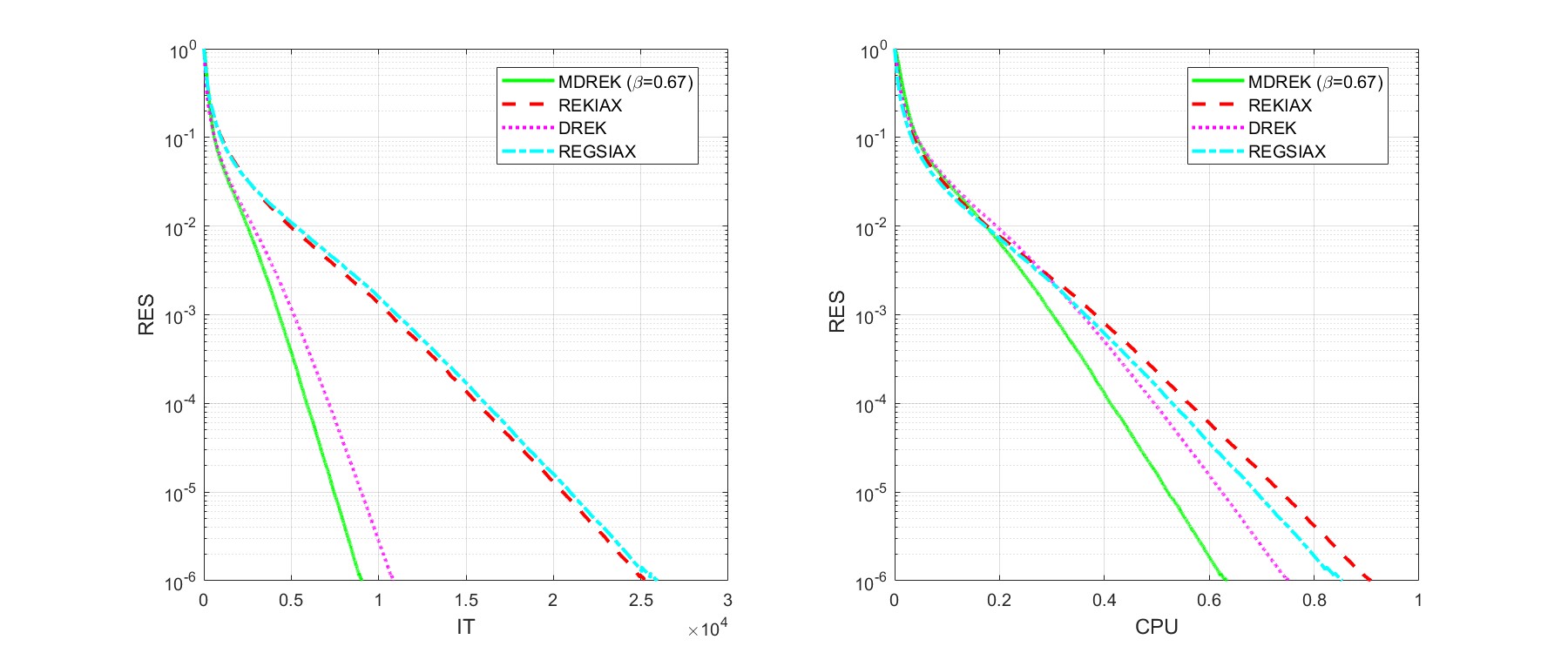}
	\caption[d]{The left is RES vs IT, and the right is RES vs CPU(s) for four different methods in Example \ref{em1} with  $m=80,n=60,p=60,rank(A)=60,  \beta=0.67$}\label{fig3}
\end{figure}

%biao2
\begin{table}
	\centering
		\caption{The average of IT and CPU of the four methods for Example \ref{em1} with deficient rank $A$}
	\label{tab3}
	\begin{tabular}{ccccccccc}
		\hline
		m&n&p& rank(A)& &REGSIAX &REKIAX&DREK&MDREK\\
		\hline
30&50&30&25&IT&17525&17012&7731&$\mathbf{6350}$($\beta=0.85$)\\
		& & & &CPU&0.3935&0.3973&0.2915&$\mathbf{0.2725}$ ($\beta=0.85$)\\
		50&30&30&25&IT&5183&5211&2738&$\mathbf{2334}$($\beta=0.47$)\\
		& & & &CPU&0.0967&0.1000&0.0695&$\mathbf{0.0616}$ ($\beta=0.47$)\\
		60&80&60&40&IT&10677&10630&4347&$\mathbf{3622}$ ($\beta=0.87$)\\
		& & & &CPU&0.3935&0.3973&0.2915&$\mathbf{0.2725}$ ($\beta=0.87$)\\
		80&60&60&40&IT&9756&10037&4374&$\mathbf{3634}$ ($\beta=0.73$)\\
		& & & &CPU&0.3231&0.3598&0.2963&$\mathbf{0.2622}$ ($\beta=0.73$)\\
		
		80&100&80&50&IT&10108&10023&4141&$\mathbf{3708}$ ($\beta=0.25$)\\
		& & & &CPU&0.4703&0.4568&0.3971&$\mathbf{0.3778}$ ($\beta=0.25$)\\
		100&80&80&50&IT&7407&7119&3368&$\mathbf{2756}$ ($\beta=0.53$)\\
		& & & &CPU&0.3204&0.3161&0.3105&$\mathbf{0.2905}$($\beta=0.53$)\\
		\hline
	\end{tabular}
\end{table}
%图4
\begin{figure}[!h]   %[!t]:位置参数，指定图形应该放置在页面的顶部（top）
	\centering
	\includegraphics[width=5in]{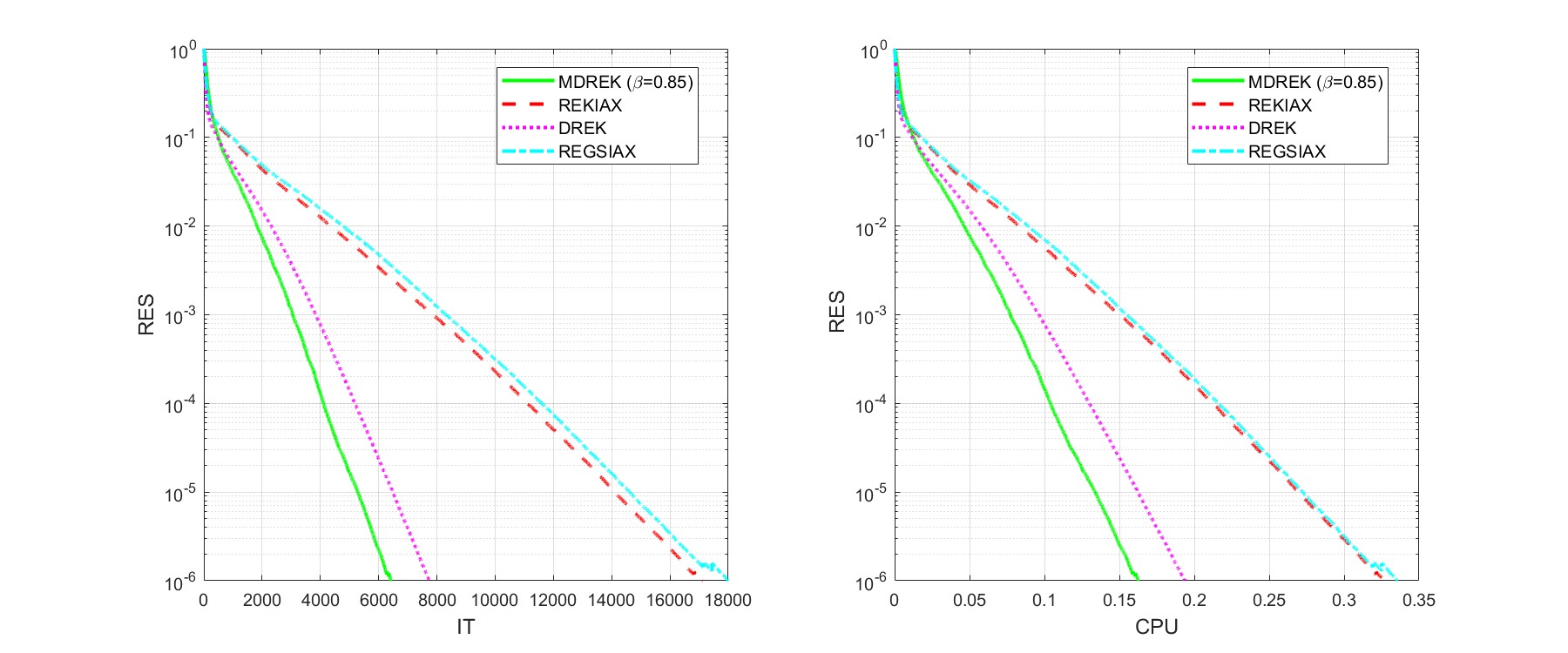}
	\caption[d]{The left is RES vs IT, and the right is RES vs CPU(s) for four different methods in Example \ref{em2} with  $m=30,n=50,p=30,rank(A)=25, \beta=0.85$}\label{fig4}
\end{figure}
%图5
\begin{figure}[!h]   %[!t]:位置参数，指定图形应该放置在页面的顶部（top）
	\centering
	\includegraphics[width=5in]{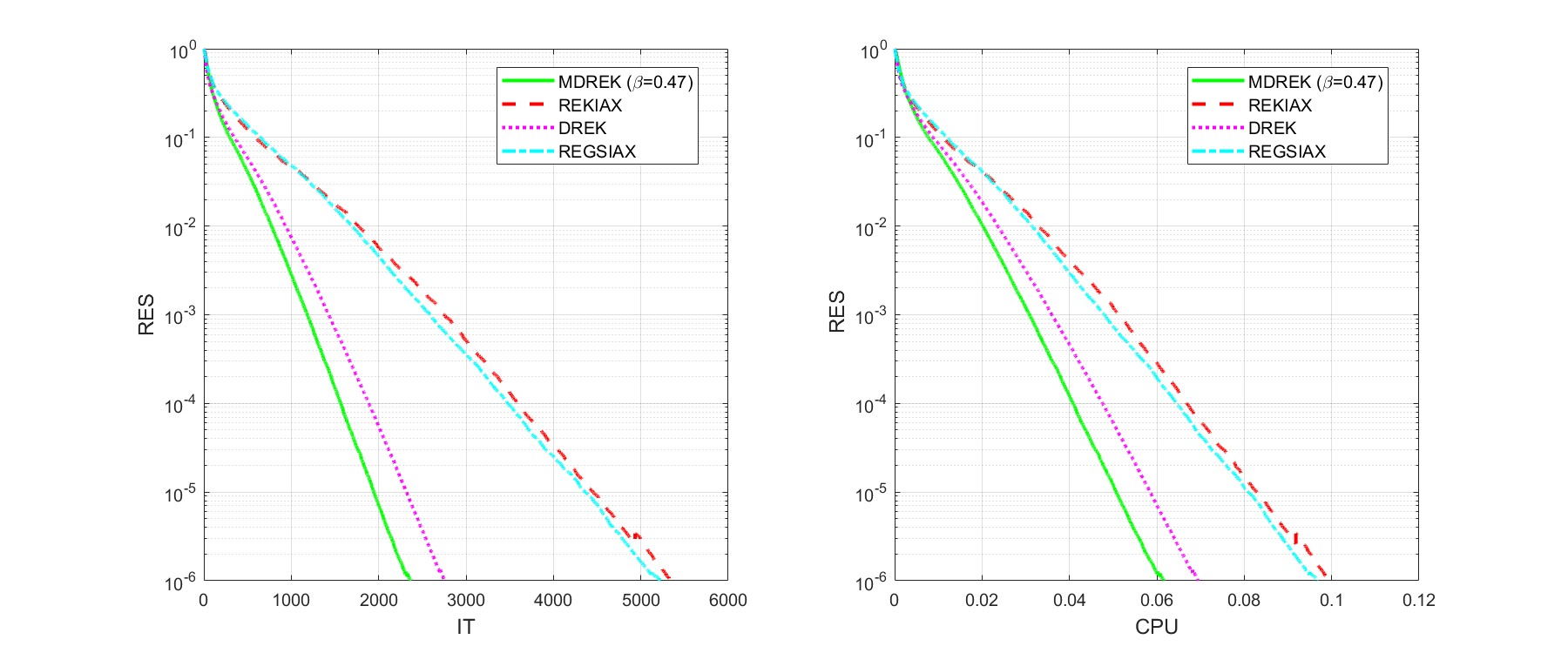}
	\caption[d]{The left is RES vs IT, and the right is RES vs CPU(s) for four different methods in Example \ref{em2} with  $m=50,\ n=30,\ p=30,\ rank(A)=25,\ \beta=0.47$}\label{fig5}
\end{figure}
%图6
\begin{figure}[!h]   %[!t]:位置参数，指定图形应该放置在页面的顶部（top）
	\centering
	\includegraphics[width=5in]{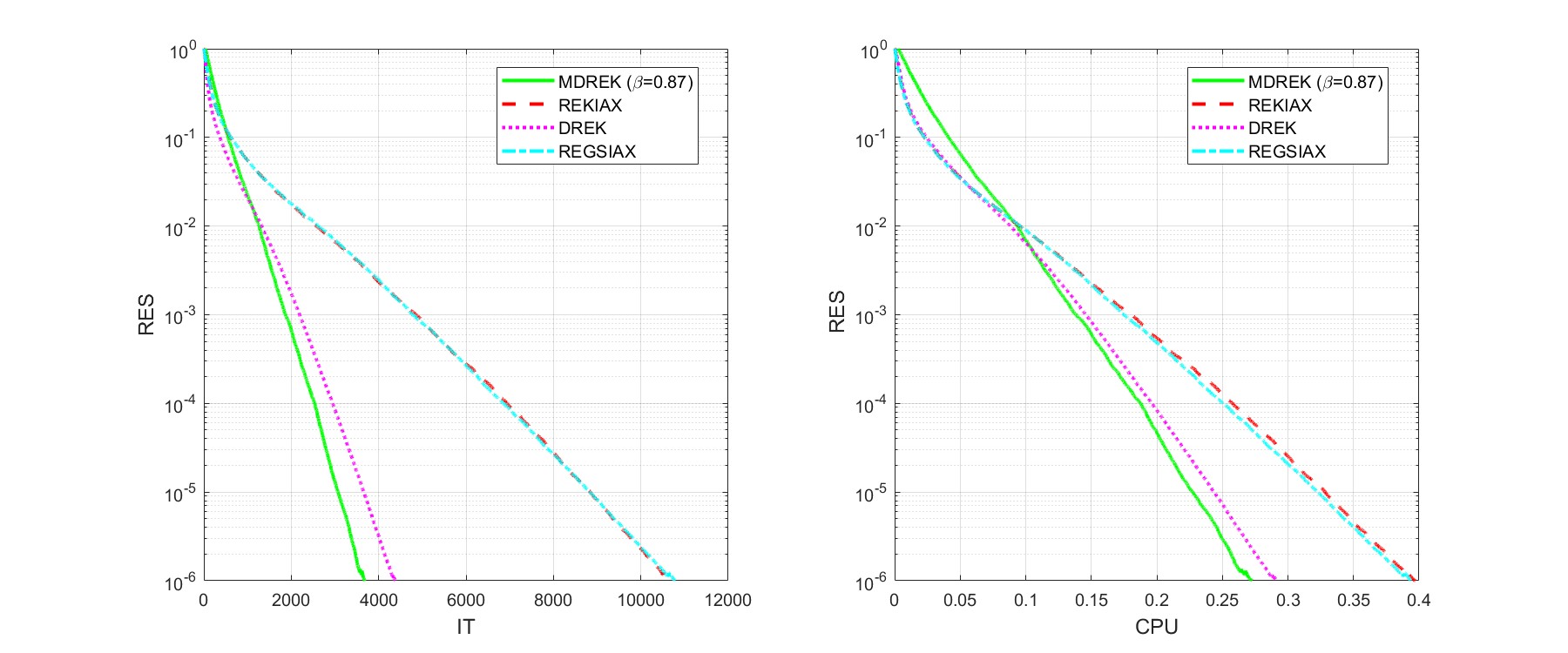}
	\caption[d]{The left is RES vs IT, and the right is RES vs CPU(s) for four different methods in Example \ref{em2} with  $m=60,n=80,p=60,rank(A)=40,  \beta=0.87$}\label{fig6}
\end{figure}

\begin{example}\label{em2} Real-world matrix.
	We compare these four different algorithms using various types of real matrix data. The relevant information about the real matrices is presented in Table~\ref{tab4}. Let $X = \text{randn}(n,p)$ with $p=10$ and $B = AX + R$ where $p=10$, $R = \mu \times \text{randn}(m,p)$, $\mu = 1 \times 10^{-5}$. The momentum parameter is set to $\beta=0.25$. As observed from Table~\ref{tab5} and Figures~\ref{fig4}--\ref{fig6}, the same conclusion as in the previous example is reached: both the proposed DREK and MDREK methods converge significantly faster than the other two methods. In particular, the MDREK method requires the smallest number of IT and the least CPU time.
		
\end{example}
%真实矩阵信息（表3）
\begin{table}[]
	\centering
	\caption{Properties of real matrices in Example~\ref{em2}}\label{tab4}
	\begin{tabular}{ccccc}
		\hline
		Name & $m\times n$ & Density & Rank & $\kappa(A)$\\
		\hline
		can$\_$144 & $144\times144$ & 6.3$\%$ & 96 & 1.01\\
		lp$\_$sc50b & $78\times50$ & 3.8$\%$ & 50 & 1.00\\
		divorce & $50\times9$ & 50$\%$ & 9  & 19.39\\
		\hline
	\end{tabular}
	\label{tab:placeholder}
\end{table}
%biao5
\begin{table}
	\centering
	\caption{The average IT and CPU of four methods for Example \ref{em2} }
	\label{tab5}
	\begin{tabular}{ccccccc}
		\hline
		real matrix& &REGSIAX&REKIAX&DREK&MDREK($\beta=0.25$)\\
		\hline
		can$\_$144& IT&50000&50000&22898&$\mathbf{20589}$ \\
		&CPU&0.9776&0.9773&0.7479&$\mathbf{0.6957}$\\
		lp$\_$sc50b& IT &17740&17820&3906&$\mathbf{3350}$ \\
		&CPU&0.3141&0.3169&0.0918&$\mathbf{0.0874}$\\
		lp$\_$sc50b$^T$& IT & 18565&18061&3691&$\mathbf{3320}$ \\
		&CPU&0.3184&0.2970&0.0855&$\mathbf{0.0787}$\\
		divorce& IT &4583&4141&1071&$\mathbf{959}$ \\
		&CPU&0.0723&0.0653&0.0203&$\mathbf{0.0193}$\\
		divorce$^T$& IT &4473&3951&998&$\mathbf{895}$ \\
		&CPU&0.0803&0.0755&0.0259&$\mathbf{0.0256}$\\
		\hline
	\end{tabular}
\end{table}
\section{Conclusion}\label{sec4}
In this paper, we propose the DREK method and its Nesterov momentum variant (MDREK) for solving the linear matrix equation system $AX=B$. Under mild conditions, we prove the convergence of the proposed methods and derive the upper bound for their convergence rates. Numerical experiments show that the CPU time of the new methods is less than that of the REKIAX and REGSIAX methods, which demonstrates the efficiency of the new methods. In future work, we will investigate their block versions.

\noindent {\bf Author Contributions}
Wendi Bao: Conceptualization, Methodology.
Jing Li: Writing – original draft preparation, Numerical experiments.
Lili Xing: Visualization, Numerical experiments. 
Weiguo Li: Investigation.
Jichao Wang: Writing – Reviewing and Editing.

\noindent {\bf Funding}
This work was supported by the National Natural Science Foundation of China (grant numbers 61931025, 42176011), and the Fundamental Research Funds for the Central Universities of China (grant number 24CX03001A).

\noindent {\bf Competing Interests}
We declare that we have no known competing financial interests or personal relationships that could have appeared to influence the work reported in this paper.

\backmatter
\bibliography{sn-bibliography}
\end{document}